\documentclass[twoside]{article}
\usepackage[preprint]{aistats2027}
\usepackage[T1]{fontenc}
\usepackage{lmodern}
\usepackage{amsmath,amssymb,amsthm,mathtools}
\usepackage{graphicx,booktabs,multirow,array}
\usepackage[round]{natbib}
\usepackage{microtype}
\usepackage{xcolor}
\usepackage{float}
\usepackage{algpseudocode}
\floatstyle{ruled}
\newfloat{algorithm}{tbp}{loa}
\floatname{algorithm}{Algorithm}
\newfloat{procedure}{tbp}{lop}
\floatname{procedure}{Procedure}
\usepackage{url}
\usepackage{xurl}
\usepackage{placeins}
\usepackage{balance}
\definecolor{ReferenceBlue}{RGB}{0,70,140}
\usepackage[colorlinks=true,linkcolor=ReferenceBlue,citecolor=ReferenceBlue,
  urlcolor=ReferenceBlue,hypertexnames=false]{hyperref}
\usepackage[capitalize,noabbrev]{cleveref}
\usepackage{autonum}
\newcommand{\R}{\mathbb R}
\newcommand{\ip}[2]{\langle #1,#2\rangle}
\newcommand{\norm}[1]{\lVert #1\rVert_2}
\newcommand{\dnorm}[1]{\lVert #1\rVert_*}
\newcommand{\fnorm}[1]{\lVert #1\rVert_F}
\newcommand{\Ball}{\mathbb B_2}
\newcommand{\Mnorm}{\mathcal M}
\newcommand{\Lbest}{L_{\mathrm{best}}}
\newcommand{\ind}{\mathbf 1}
\newcommand{\eps}{\varepsilon}
\DeclareMathOperator{\dist}{dist}
\DeclareMathOperator{\spanop}{span}

\DeclareMathOperator{\argmin}{arg\,min}
\DeclareMathOperator{\argmax}{arg\,max}
\newtheorem{theorem}{Theorem}
\newtheorem{lemma}[theorem]{Lemma}

\newtheorem{corollary}[theorem]{Corollary}
\theoremstyle{definition}
\newtheorem{assumption}{Assumption}
\newtheorem{example}{Example}

\hypersetup{pdftitle={Dimension-Adaptive Accuracy Certificates for Convex Optimization},
  pdfauthor={Egor Gladin},pdfsubject={Convex optimization and accuracy certificates}}
\begin{document}
\runningtitle{Dimension-Adaptive Accuracy Certificates}
\runningauthor{Egor Gladin}
\twocolumn[
\aistatstitle{Dimension-Adaptive Accuracy Certificates\\for Convex Optimization}
\aistatsauthor{Egor Gladin}
\aistatsaddress{HSE University}
]
\begin{abstract}
Many convex optimization methods use affine lower models to certify that a candidate solution is near optimal. Keeping full gradients can be costly when the informative directions occupy a small, initially unknown subspace. We develop accuracy certificates that store projected affine minorants together with bounds on the discarded components. The certificates remain valid as the subspace, queries, and certificate weights are chosen adaptively. For quadratic regularization, we derive an exact correction that accounts for when each discarded component was recorded, and a compact problem for selecting certificate weights. We also characterize the strongest lower bound supported by the compressed records when an ambient direction remains unused. The certificates can be used as stopping criteria for methods that provide valid affine minorants. We also analyze an adaptive cutting-plane construction whose oracle bound depends on the discovered dimension. Experiments on kernel SVM and $\nu$-SVR problems show reduced certificate communication and stopping times under an emulated bandwidth constraint.

\end{abstract}
\section{Introduction}
\label{sec:introduction}
The problem of developing a reliable stopping criterion arises when an optimization method cannot bound the remaining gap to the minimum.
For convex problems, affine lower models resolve this difficulty. For a convex function $h$, an exact subgradient $a_i\in\partial h(x_i)$ at a query point $x_i$ gives the affine minorant
\begin{equation}
 h(x)\ge b_i+\ip{a_i}{x},\qquad
 b_i=h(x_i)-\ip{a_i}{x_i}.
 \label{eq:exact-cut}
\end{equation}
Taking a convex combination of these minorants and minimizing the resulting model over the feasible domain gives a lower bound on the minimum. Subtracting this bound from a feasible objective value produces an observable upper bound on the optimality gap \citep{teo2010}.
Accuracy certificates formalize this principle for execution protocols \citep{nemirovski2010,gladin2025}.

The cost of retaining that information can be substantial. An archive of $N$ slopes in $\R^d$ uses $Nd$ numbers even when the observed slopes lie close to a much smaller space. Projecting the slopes reduces storage, but also changes the affine models; the resulting projected gap then need not certify the original problem.
Greedy reduced-basis methods build approximation spaces incrementally \citep{devore2013}. For certification, an additional challenge arises when the subspace used to approximate observed subgradients expands: a component discarded at an earlier iteration may lie along a direction added later.

We study the following question: \emph{what information must accompany adaptively compressed affine minorants to preserve a trustworthy, and as sharp as possible, ambient stopping gap?} Our answer is to store the projected part of each slope together with a verified set containing the omitted residual. Following the uncertainty-set viewpoint of robust optimization \citep{bental1998}, the construction accounts for every residual in that set before minimizing the model.
The certificate allows arbitrary convex combinations of the recorded affine minorants and remains valid even when query points depend on the evolving compression space or the assumed low-dimensional structure is inaccurate.

For an objective with a known quadratic regularizer, the answer is especially explicit. Suppose the retained spaces grow by appending orthonormal directions. Each raw cut stores its projected coordinates, the norm of its omitted slope, and the dimension of the space when it was recorded.
A short backward recursion computes the exact largest possible norm of a weighted aggregate slope. The resulting certificate weights are selected by a compact second-order-cone weight-selection problem.

\paragraph{Contributions.}
\begin{itemize}
 \item
 We generalize affine-minorant certificates to slopes represented by a
 retained component and a verified uncertainty set for the residual.
 \item
 For a quadratic regularizer and a growing orthonormal basis, we give an exact
 recursion for evaluating the certificate and derive a second-order-cone
 weight-selection problem that yields an optimal certificate under stated
 assumptions.
 \item
 We quantify storage and evaluation costs and describe how to use the
 certificates as auxiliary stopping criteria.
 \item
 On a Euclidean ball, we analyze a construction that doubles its working
 dimension as needed. Its bound on objective-oracle calls depends on
 discovered dimension, assuming a specified volume-reduction procedure.
 \item
 On kernel SVM and $\nu$-SVR problems, we demonstrate that compressed
 certificates can reduce communication and stopping time for three
 first-order methods under an emulated bandwidth constraint.
\end{itemize}

\paragraph{Relation to prior work.}
The weighted-minorant principle is established. \citet{nemirovski2010} develop accuracy certificates for convex minimization, saddle problems, and variational inequalities. \citet{gladin2025} extend them to inexact oracles and construct certificates for polytope cutting-plane methods. Fenchel views of first-order methods appear in \citet{bach2015,gutman2023}, while \citet{burns2026} distinguish primal convergence from convergence of a computable certificate. We use these foundations; the focus here is the geometric information that survives adaptive slope compression.

Weighted affine models, cut aggregation, and inexact minorants are established tools in convex optimization \citep{teo2010,oliveira2014,oliveira2016,parshakova2024}. Our focus is preserving a lower bound for the target objective after adaptive slope compression. Robust optimization supplies uncertainty-set calculus \citep{bental1998}, and exact worst-case oracle analysis emphasizes the role of the information retained \citep{taylor2017}. The historical sets used here admit both an exact aggregate calculation and a restricted full-record minimax equality.

Low-rank online learning and greedy reduced bases adapt to unknown spaces under different performance criteria \citep{hazan2016,devore2013}. Error feedback addresses compressed updates \citep{karimireddy2019}; matrix sketching and spectral bundling reduce storage of matrix iterates or problem-specific models \citep{yurtsever2017,angell2024}. Our compressed object is a vector affine minorant, and the guarantee is a lower bound for the ambient objective. Our adaptive cutting-plane construction combines established fixed-dimensional volume reduction \citep{anstreicher1997,jiang2020} with online discovery of informative directions.

\paragraph{Notation.}
We use the Euclidean inner product and norm unless stated otherwise. For a linear subspace $S$, $\Pi_S$ denotes the orthogonal projection onto $S$. The simplex is $\Delta_N=\{\alpha\in\R^N:\alpha\ge0,\ \ind^\top\alpha=1\}$. A function's convex conjugate is $\Psi^*(y)=\sup_x\{\ip{y}{x}-\Psi(x)\}$. A vector $a\in\partial_\delta h(x)$ is a $\delta$-subgradient of $h$ at $x$, meaning that $h(y)\ge h(x)+\ip{a}{y-x}-\delta$ for every $y$. For a set $B$, its indicator $I_B$ is zero on $B$ and infinite outside it. For sets $K_i$, $\sum_i\alpha_iK_i$ denotes their weighted Minkowski sum: the set of vectors $\sum_i\alpha_i r_i$ with $r_i\in K_i$.

\section{The problem and aggregate certificates}
\label{sec:baseline}
Let $X\subseteq\R^d$ be nonempty and convex, and consider
\begin{equation}
 F_* = \inf_{x\in X}\{F(x):=h(x)+\psi_0(x)\}\in\R,
 \label{eq:baseline-problem}
\end{equation}
where $h$ is convex and $\psi_0$ is the known part of the objective.
Choose a convex outer domain $B\supseteq X$ such that
$\Psi=\psi_0+I_B$ is proper, closed, and convex.

Given $N$ affine minorants as in \eqref{eq:exact-cut}, any
$\alpha\in\Delta_N$ defines
\begin{equation}
 b^\alpha=\sum_{i=1}^N\alpha_i b_i,\qquad
 a^\alpha=\sum_{i=1}^N\alpha_i a_i.
 \label{eq:baseline-aggregate}
\end{equation}
satisfying
\begin{equation}
\begin{aligned}
 F_*&\ge b^\alpha+\inf_{x\in X}
       \{\psi_0(x)+\ip{a^\alpha}{x}\}\\
    &\ge b^\alpha-\Psi^*(-a^\alpha)
       =:L_{\mathrm{base}}(\alpha).
\end{aligned}
 \label{eq:baseline-certificate}
\end{equation}
For a feasible incumbent $\widehat x$ and $U \geq F(\widehat x)$,
the baseline certificate gives
$F(\widehat x)-F_*\le U-L_{\mathrm{base}}(\alpha)$.
Maximizing $L_{\mathrm{base}}(\alpha)$ over $\Delta_N$ gives the lower bound used
in the BMRM stopping test \citep[Theorem~2]{teo2010}.
When $X=\R^d$ and $\psi_0(x)=\lambda\norm{x}^2/2$, $\lambda>0$,
the certificate is especially simple:
\begin{equation}
 L_{\mathrm{base}}(\alpha)
 =b^\alpha-\frac{\norm{a^\alpha}^2}{2\lambda}\le F_*.
 \label{eq:baseline-quadratic}
\end{equation}

\paragraph{Compressed records.}
Instead of retaining the full slope $a_i$, store an intercept $b_i$,
a retained slope $p_i$, and a verified uncertainty set $K_i$ for the
discarded residual $r_i := a_i - p_i$.
Thus, the record $(b_i,p_i,K_i)$ asserts that there exists a residual $r_i\in K_i$ such that
\begin{equation}
 h(x)\ge b_i+\ip{p_i+r_i}{x}\quad\text{for every }x\in X.
 \label{eq:baseline-record}
\end{equation}
For $\alpha\in\Delta_N$, define
\begin{equation}
 p^\alpha=\sum_i\alpha_i p_i,\qquad
 K^\alpha=\sum_i\alpha_iK_i.
 \label{eq:compressed-aggregate}
\end{equation}
The actual aggregate slope $a^\alpha$ belongs to $p^\alpha+K^\alpha$.
The following theorem generalizes \eqref{eq:baseline-certificate} to
compressed records by accounting for residual uncertainty.
\begin{theorem}
\label{thm:baseline-robust}
Suppose the $N$ records satisfy \eqref{eq:baseline-record}, and let
$U\ge F(\widehat x)$ for a feasible $\widehat x\in X$.
For every $\alpha\in\Delta_N$,
\begin{gather}
 L(\alpha)=b^\alpha-
 \sup_{r\in K^\alpha}\Psi^*(-p^\alpha-r)\le F_*,
 \label{eq:baseline-robust-bound}\\
 0\le F(\widehat x)-F_*\le U-L(\alpha).
 \label{eq:baseline-gap}
\end{gather}
\end{theorem}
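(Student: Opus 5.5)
The plan is to reduce the statement to the baseline certificate \eqref{eq:baseline-certificate}, applied with the unknown actual residuals. For each record, \eqref{eq:baseline-record} supplies some $r_i\in K_i$ with $h(x)\ge b_i+\ip{p_i+r_i}{x}$ on $X$. I fix such residuals once, before choosing $\alpha$. Since the inequality is pointwise on $X$ and $\alpha\in\Delta_N$, taking the $\alpha$-weighted average gives
\[
 h(x)\ge b^\alpha+\ip{p^\alpha+\bar r}{x}\quad\text{for all }x\in X,\qquad \bar r:=\sum_i\alpha_i r_i\in K^\alpha,
\]
where membership in $K^\alpha$ is exactly the definition of the weighted Minkowski sum in \eqref{eq:compressed-aggregate}.

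Next I add $\psi_0$ to both sides and take the infimum over $X$, which gives
\[
 F_*\ge b^\alpha+\inf_{x\in X}\{\psi_0(x)+\ip{p^\alpha+\bar r}{x}\}.
\]
Because $X\subseteq B$, this infimum is at least $\inf_{x\in\R^d}\{\Psi(x)+\ip{p^\alpha+\bar r}{x}\}$, and that quantity equals $-\Psi^*(-p^\alpha-\bar r)$ by definition of the conjugate. So
\[
 F_*\ge b^\alpha-\Psi^*(-p^\alpha-\bar r)\ge b^\alpha-\sup_{r\in K^\alpha}\Psi^*(-p^\alpha-r)=L(\alpha),
\]
where the last step uses only $\bar r\in K^\alpha$. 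This proves \eqref{eq:baseline-robust-bound}. The point is that $\bar r$ is unknown, so we pass to the worst case over the verified set; this is the one place the argument departs from the baseline certificate.

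For \eqref{eq:baseline-gap}, feasibility of $\widehat x$ gives $F_*\le F(\widehat x)$, so $F(\widehat x)-F_*\ge 0$. The upper bound follows from $F(\widehat x)\le U$ together with $F_*\ge L(\alpha)$.

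The only delicate step is the extended-value bookkeeping. Since $F_*\in\R$ is assumed finite, if the supremum is $+\infty$ then $L(\alpha)=-\infty$ and the bound holds trivially. $\Psi^*$ never takes the value $-\infty$ because $\Psi$ is proper, so no $\infty-\infty$ expression arises. Because of this, closedness of $\Psi$ is not actually needed here. I will also note explicitly that the residuals $r_i$ are fixed independently of $\alpha$, so the bound holds for every $\alpha$ at once, even when $\alpha$ is chosen adaptively.
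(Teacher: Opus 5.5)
Your proposal is correct and follows the paper's proof essentially step for step. Like the paper, you fix the actual residuals, average the minorants to obtain $\bar r\in K^\alpha$, enlarge $X$ to $B$ to reach $-\Psi^*(-p^\alpha-\bar r)$, pass to the worst case over $K^\alpha$, and subtract from $U$; your extended-value remarks (properness of $\Psi$ keeps $\Psi^*>-\infty$, and closedness is unused) are accurate details the paper leaves implicit.
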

The proof is in Appendix~\ref{app:robust-proof}.
The bounds remain valid under adaptive choices of compression spaces and retained records, without requiring low-dimensional structure.
If the supremum in \eqref{eq:baseline-robust-bound} is difficult
to compute, replacing it by a certified upper bound remains safe.

\paragraph{An exact certificate for a growing subspace}
We now consider unconstrained minimization with the quadratic regularizer
$\psi_0(x)=\lambda\norm{x}^2/2$, where $\lambda>0$.
Let $q_1,\ldots,q_k$ be an
append-only orthonormal basis used to compress the observed slopes. Write
$Q=[q_1\ \cdots\ q_k]$ and define
\begin{equation}
 S_j=\spanop(q_1,\ldots,q_j),\qquad S_0=\{0\}.
 \label{eq:baseline-subspaces}
\end{equation}
For $j=0,\ldots,k$, let $\mathcal B_j=\{r\in S_j^\perp:\norm{r}\le1\}$ be
the unit ball used to model a residual left after projection onto $S_j$.
A raw slope $a_i$ projected when the basis has size $m_i$ is represented by
the stored tuple $(v_i,\eta_i,m_i)$, satisfying
\[
p_i :=\Pi_{S_{m_i}}a_i=Qv_i,\qquad
 \eta_i\ge\norm{a_i-p_i}
\]
Then $r_i \in \eta_i \mathcal B_{m_i}$.
For $\alpha\in\Delta_N$, let $v^\alpha=\sum_i\alpha_i v_i$. Then
\[
   a^\alpha = Q v^\alpha + \sum_{i=1}^N \alpha_i r_i,
\]
where
\[
 \sum_{i=1}^N\alpha_i r_i
 \in\sum_{j=0}^k
 \left(\sum_{i:m_i=j}\alpha_i\eta_i\right)
 \mathcal B_j=K(c^\alpha),
\]
where $c^\alpha_j:=\sum_{i:m_i=j}\alpha_i\eta_i$ and
$K(c):=\sum_{j=0}^k c_j\mathcal B_j$ for
$c\in\R_+^{k+1}$.

To evaluate the certificate, we need the
largest slope norm compatible with the stored information. The following
procedure computes it by working backward through the basis. Each step
combines a retained coordinate with the orthogonal components already
accounted for, then adds the next residual radius to allow for possible
alignment.

\begin{procedure}[H]
\caption{Historical-residual norm computation}
\label{proc:baseline-norm}
\small
\textbf{Input:} $v\in\R^k$, $c\in\R_+^{k+1}$, ambient dimension $d\ge k$.
\begin{enumerate}
\setlength{\itemsep}{2pt}
\item Set $u_k\leftarrow c_k$ if $k<d$, and $u_k\leftarrow0$ otherwise.
\item For $j=k,k-1,\ldots,1$, set
\begingroup
\setlength{\abovedisplayskip}{3pt}
\setlength{\belowdisplayskip}{3pt}
\setlength{\abovedisplayshortskip}{3pt}
\setlength{\belowdisplayshortskip}{3pt}
\[
 u_{j-1}\leftarrow c_{j-1}+\sqrt{v_j^2+u_j^2}.
\]
\endgroup
\item Return $\Mnorm(v,c):=u_0$.
\end{enumerate}
\end{procedure}
If $k=0<d$, this reads $\Mnorm=c_0$. At full rank, $\mathcal B_d=\{0\}$, so $c_d$ has no effect.

\begin{lemma}
\label{lem:baseline-norm}
For all $v\in\R^k$ and $c\in\R_+^{k+1}$,
Procedure~\ref{proc:baseline-norm} returns
\begin{equation}
 \Mnorm(v,c)=\max_{r\in K(c)}\norm{Qv+r}.
 \label{eq:baseline-exact-norm}
\end{equation}
\end{lemma}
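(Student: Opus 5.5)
Note first that $\Mnorm$ is computed by a backward recursion, so the natural route is a backward induction over the nested subspaces. For $j=0,\ldots,k$ we introduce the truncated quantity
\[
 M_j:=\max\Bigl\{\norm{\textstyle\sum_{l>j}v_lq_l+\sum_{l\ge j}\rho_l}:\ \rho_l\in c_l\mathcal B_l\Bigr\}.
\]
Because $\mathcal B_l\subseteq S_l^\perp\subseteq S_j^\perp$ for $l\ge j$, every vector inside the norm lies in $S_j^\perp$. The maximum exists since each $\mathcal B_l$ is compact. The target quantity is $M_0=\max_{r\in K(c)}\norm{Qv+r}$, so it suffices to prove $M_j=u_j$ for every $j$.

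\textbf{Base case.} If $k<d$, then $S_k^\perp\ne\{0\}$ and $\max_{\rho\in c_k\mathcal B_k}\norm{\rho}=c_k$. If $k=d$, then $\mathcal B_d=\{0\}$. In both cases $M_k=u_k$.

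\textbf{Inductive step.} Fix $j\ge1$ and write the vector defining $M_{j-1}$ as $w+\rho_{j-1}$, where $w=v_jq_j+z$ and $z=\sum_{l>j}v_lq_l+\sum_{l\ge j}\rho_l$. Then $z\in S_j^\perp$, so $z\perp q_j$, and $z$ ranges exactly over the set defining $M_j$.

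\emph{Upper bound.} By Pythagoras, $\norm{w}^2=v_j^2+\norm{z}^2\le v_j^2+u_j^2$. The triangle inequality then gives $\norm{w+\rho_{j-1}}\le\sqrt{v_j^2+u_j^2}+c_{j-1}$.

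\emph{Attainment.} Choose $z^\star$ attaining $\norm{z^\star}=u_j$, and set $w^\star=v_jq_j+z^\star$, so $\norm{w^\star}=\sqrt{v_j^2+u_j^2}$. Since $j\ge1$, $w^\star\in S_{j-1}^\perp$. If $w^\star\ne0$, take $\rho_{j-1}=c_{j-1}w^\star/\norm{w^\star}$. This lies in $c_{j-1}\mathcal B_{j-1}$ and aligns with $w^\star$, so $\norm{w^\star+\rho_{j-1}}=\norm{w^\star}+c_{j-1}$. If $w^\star=0$, take any unit vector of $S_{j-1}^\perp$, which is nonzero because $j-1<k\le d$. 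Hence $M_{j-1}=u_{j-1}$.

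Induction down to $j=0$ gives $\Mnorm(v,c)=u_0=M_0$, which is \eqref{eq:baseline-exact-norm}.

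\textbf{Main obstacle.} The delicate point is the attainment step: the residual at level $j-1$ must be able to align with the accumulated vector. This requires $w^\star\in S_{j-1}^\perp$, which holds because the basis is append-only, so that $q_j$ and all later residual directions are orthogonal to $S_{j-1}$. The other delicate point is the boundary case $k=d$, where $\mathcal B_k$ is trivial; it is handled by the special initialization $u_k=0$. The remaining steps are routine.
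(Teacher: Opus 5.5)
Your proof is correct and follows the paper's argument essentially step for step. Your $M_j$ is the largest norm in the paper's compact set $T_j=\sum_{\ell>j}v_\ell q_\ell+\sum_{\ell\ge j}c_\ell\mathcal B_\ell\subseteq S_j^\perp$, and both proofs use backward induction: Pythagoras and the triangle inequality give the upper bound, and aligning the level-$(j-1)$ residual inside $S_{j-1}^\perp$ gives attainment, with the zero-vector case handled separately (your remark that $S_{j-1}^\perp\neq\{0\}$ because $j-1<k\le d$ makes explicit a detail the paper leaves implicit).
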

The proof is in Appendix~\ref{app:baseline-norm-proof}.

We now use this norm in Theorem~\ref{thm:baseline-robust} to obtain a
certificate for the quadratic regularizer. Choosing the aggregation weights
to maximize this lower bound gives the \emph{weight-selection problem}.

\begin{theorem}
\label{thm:baseline-quadratic}
For the compressed records above, every $\alpha\in\Delta_N$ gives
\begin{equation}
 L_Q(\alpha):=b^\alpha-
 \frac{\Mnorm(v^\alpha,c^\alpha)^2}{2\lambda}\le F_*.
 \label{eq:baseline-quadratic-cert}
\end{equation}
The best bound of this form is the negative optimal value of
\begin{equation}
\begin{aligned}
 \min_{\substack{\alpha\in\Delta_N\\u\in\R_+^{k+1}}}\quad
 &\frac{u_0^2}{2\lambda}-b^\alpha\\
 \text{s.t.}\quad &\norm{(v^\alpha_j,u_j)}\le u_{j-1}-c^\alpha_{j-1}
                         \quad(1\le j\le k),\\
 &u_k\ge c^\alpha_k\quad(k<d).
\end{aligned}
 \label{eq:baseline-weight-master}
\end{equation}
\end{theorem}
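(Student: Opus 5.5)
The plan is to derive the first claim from Theorem~\ref{thm:baseline-robust} and Lemma~\ref{lem:baseline-norm}, and the second from a monotonicity argument on the recursion of Procedure~\ref{proc:baseline-norm}.

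\emph{Step 1: reduce the certificate to the lemma.} Here $X=B=\R^d$ and $\psi_0=\lambda\norm{\cdot}^2/2$, so $\Psi^*(y)=\norm{y}^2/(2\lambda)$. Each record $(v_i,\eta_i,m_i)$ gives a record $(b_i,p_i,K_i)$ in the sense of \eqref{eq:baseline-record}, with $p_i=Qv_i$ and $K_i=\eta_i\mathcal B_{m_i}$. The corresponding $K^\alpha=\sum_i\alpha_i\eta_i\mathcal B_{m_i}$ is a weighted Minkowski sum of balls. For each $j$, the balls $\mathcal B_{m_i}$ with $m_i=j$ coincide, and a Minkowski sum of nonnegative multiples of the same convex set equals that set scaled by the sum of the multiples. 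Hence $K^\alpha=K(c^\alpha)$, and $p^\alpha=Qv^\alpha$. Theorem~\ref{thm:baseline-robust} then gives
\[
F_*\ge b^\alpha-\frac{1}{2\lambda}\sup_{r\in K(c^\alpha)}\norm{Qv^\alpha+r}^2 .
\]
Here I use that $\norm{-p-r}=\norm{p+r}$ and that squaring is monotone on $\R_+$. By Lemma~\ref{lem:baseline-norm}, the supremum equals $\Mnorm(v^\alpha,c^\alpha)^2$, which is \eqref{eq:baseline-quadratic-cert}.

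\emph{Step 2: relate the optimization problem to the recursion.} Fix $\alpha$. I will show that the minimum of $u_0^2/(2\lambda)$ over feasible $u\in\R_+^{k+1}$ equals $\Mnorm(v^\alpha,c^\alpha)^2/(2\lambda)$.

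For the lower bound on feasible values, use backward induction. Let $\bar u_j$ denote the values produced by Procedure~\ref{proc:baseline-norm}. At the base, $u_k\ge\bar u_k$: if $k<d$ this is the constraint $u_k\ge c^\alpha_k$, and if $k=d$ it holds because $\bar u_k=0$ and $u_k\ge 0$. For the inductive step, suppose $u_j\ge\bar u_j\ge0$. Then the $j$-th constraint gives
\[
u_{j-1}\ge c^\alpha_{j-1}+\sqrt{(v^\alpha_j)^2+u_j^2}\ge c^\alpha_{j-1}+\sqrt{(v^\alpha_j)^2+\bar u_j^2}=\bar u_{j-1}.
\]
This uses that $t\mapsto\sqrt{v^2+t^2}$ is nondecreasing on $t\ge0$. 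The induction gives $u_0\ge\bar u_0=\Mnorm\ge0$, so $u_0^2\ge\Mnorm^2$.

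For attainment, note that $u=\bar u$ is feasible, with every constraint holding with equality, and it is nonnegative because $c^\alpha\ge0$.

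\emph{Step 3: conclude.} Minimizing jointly over $(\alpha,u)$ is the same as minimizing over $\alpha$ the inner minimum found in Step~2. Therefore the optimal value of \eqref{eq:baseline-weight-master} is $\min_\alpha\{\Mnorm(v^\alpha,c^\alpha)^2/(2\lambda)-b^\alpha\}=-\max_\alpha L_Q(\alpha)$. The maximum is attained because the problem reduces to maximizing over the compact set $\Delta_N$ the function $L_Q$. This function is continuous, since $v^\alpha$ and $c^\alpha$ are linear in $\alpha$ and $\Mnorm$ is a composition of continuous operations.

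The constraints are second-order-cone constraints in $(\alpha,u)$, because $v^\alpha$, $c^\alpha$, and $b^\alpha$ are linear in $\alpha$. This convexity is not needed for the equality, only for the problem's tractability.

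The main obstacle is the monotone-comparison step in Step~2. Its only subtlety is the case $k=d$, where $u_k$ is constrained only by $u\in\R_+^{k+1}$; the base case above handles it with $\bar u_k=0$. Everything else follows directly from Theorem~\ref{thm:baseline-robust} and Lemma~\ref{lem:baseline-norm}.
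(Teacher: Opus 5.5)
Your proposal is correct and follows the paper's proof essentially step for step. You group the residual balls by birth dimension to get $K^\alpha=K(c^\alpha)$ and apply Theorem~\ref{thm:baseline-robust} with Lemma~\ref{lem:baseline-norm}, then use backward induction to show that every feasible $u_j$ dominates the value from Procedure~\ref{proc:baseline-norm}, whose values are themselves feasible. Your explicit treatment of the $k=d$ base case and of attainment over $\Delta_N$ spells out details the paper leaves implicit.
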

The proof is in Appendix~\ref{app:baseline-quadratic-proof}.
Note that  \eqref{eq:baseline-weight-master}  can be represented as a second-order-cone program
(SOCP), with $N$ weight variables and $O(k+1)$ additional variables and
cone constraints.

The recording dimensions must remain fixed as the basis grows.
Treating an old residual as orthogonal to the current space can produce
a false lower bound.
\begin{example}
\label{ex:history}
Let $d=2$, $\lambda=1$, $0<\eta<1$, and
\[
 h(x)=\max\{\eta x_1+(\eta^2-1)/2,\ x_1\}.
\]
Discard the first slope at dimension zero with radius $\eta$, then
retain the second slope along the first coordinate axis.
For $\alpha=(1/2,1/2)$, we have $v^\alpha=1/2$ and
$c^\alpha=(\eta/2,0)$, so $\Mnorm(v^\alpha,c^\alpha)=(1+\eta)/2$ and
\[
 L_Q(\alpha)=\frac{\eta^2-2\eta-3}{8}=F_*.
\]
Relabeling the old residual as orthogonal to the current space instead
gives the norm $\sqrt{1+\eta^2}/2$ and reports
$(\eta^2-3)/8$, exceeding $F_*$ by $\eta/4$.
The calculation is in Appendix~\ref{app:counterexamples}.
\end{example}

The next result characterizes the strongest lower bound supported by the
compressed records alone. 
\begin{theorem}
\label{thm:baseline-minimax}
Assume $k<d$. For independently chosen $r_i\in\eta_i\mathcal B_{m_i}$,
define
\[
 h_r(x)=\max_i\{b_i+\ip{Qv_i+r_i}{x}\}.
\]
Then
\begin{equation}
\begin{aligned}
 &\inf_{r_i\in\eta_i\mathcal B_{m_i}}\inf_{x\in\R^d}
 \left\{h_r(x)+\frac\lambda2\norm{x}^2\right\} =\max_{\alpha\in\Delta_N}L_Q(\alpha).
\end{aligned}
 \label{eq:baseline-minimax-equality}
\end{equation}
\end{theorem}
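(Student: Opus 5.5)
The inequality ``$\ge$'' in \eqref{eq:baseline-minimax-equality} is the easy direction. The reverse inequality would follow from exhibiting one admissible choice of residuals $r^\star$ that attains the right-hand side. I plan to build $r^\star$ from an optimal $\alpha^\star$ of the weight-selection problem. A direct minimax swap is not available: after dualizing, the objective is concave in $r$ rather than convex.

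\textbf{Step 1 (inner dual for fixed $r$).} Fix residuals $r_i\in\eta_i\mathcal B_{m_i}$ and write $a_i(r)=Qv_i+r_i$. The function $h_r$ is a finite max of affine functions, and $\frac\lambda2\norm{x}^2$ is strongly convex. The standard minimax theorem over $\Delta_N\times\R^d$ (compact simplex, bilinear coupling) then gives
\[
\inf_x\{h_r(x)+\tfrac\lambda2\norm{x}^2\}=\max_{\alpha\in\Delta_N}\Bigl[b^\alpha-\tfrac1{2\lambda}\bigl\lVert Qv^\alpha+\textstyle\sum_i\alpha_ir_i\bigr\rVert_2^2\Bigr]=:\max_\alpha g(\alpha,r).
\]
This is \eqref{eq:baseline-quadratic} applied to the exact minorants of $h_r$.

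\textbf{Step 2 (the easy inequality).} Since $\sum_i\alpha_ir_i\in K(c^\alpha)$, Lemma~\ref{lem:baseline-norm} gives $g(\alpha,r)\ge L_Q(\alpha)$ for every $\alpha$. Taking $\max_\alpha$ and then $\inf_r$ yields LHS $\ge\max_\alpha L_Q(\alpha)$.

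I also record a fact needed later. The set $\{\sum_i\alpha_ir_i: r_i\in\eta_i\mathcal B_{m_i}\}$ equals $K(c^\alpha)$, because $\sum_{i:m_i=j}\alpha_i\eta_i\mathcal B_j=c^\alpha_j\mathcal B_j$ by convexity of $\mathcal B_j$. Hence $L_Q(\alpha)=\min_r g(\alpha,r)$, so the right-hand side is $\max_\alpha\min_r g$.

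\textbf{Step 3 (reverse inequality via a saddle point).} The function $L_Q$ is concave on $\Delta_N$: $b^\alpha$ is linear, and by Lemma~\ref{lem:baseline-norm} the quantity $\Mnorm(v^\alpha,c^\alpha)=\max_{\rho_j\in\mathcal B_j}\norm{Qv^\alpha+\sum_jc^\alpha_j\rho_j}$ is a max of functions that are convex (norms of affine maps) in $\alpha$. So $L_Q$ attains its maximum at some $\alpha^\star$.

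Choose unit vectors $\rho_j^\star\in\mathcal B_j$ realizing the maximum in Lemma~\ref{lem:baseline-norm} at $(v^{\alpha^\star},c^{\alpha^\star})$. Following the backward recursion, each $\rho^\star_j$ aligns the level-$j$ residual with the already accumulated vector $(v_{j+1},\dots,v_k,u_k)$. The hypothesis $k<d$ guarantees that every $\mathcal B_j$ with $j\le k$ contains a unit vector, including a genuinely new direction for $j=k$. This allows the alignment even when the accumulated vector is zero or has degenerate components.

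Set $r_i^\star=\eta_i\rho^\star_{m_i}$; this choice is \emph{independent of $\alpha_i$}. Then $g(\alpha^\star,r^\star)=L_Q(\alpha^\star)$. Put $a^\star=Qv^{\alpha^\star}+\sum_i\alpha^\star_ir^\star_i$ and $x^\star=-a^\star/\lambda$. It remains to show that $\alpha^\star$ maximizes $g(\cdot,r^\star)$, since then Step 1 gives LHS $\le g(\alpha^\star,r^\star)=L_Q(\alpha^\star)$. As $g(\cdot,r^\star)$ is concave and smooth in $\alpha$, this reduces to the first-order condition: the quantity
\[
b_i-\ip{a_i(r^\star)}{a^\star}/\lambda
\]
should be maximal (equal to a common value) on $\operatorname{supp}\alpha^\star$. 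Equivalently, every cut with positive weight is active at $x^\star$.

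\textbf{Main obstacle.} The hard part is deriving that first-order condition from the optimality of $\alpha^\star$ for the nonsmooth concave $L_Q$. I would use Danskin's theorem on $\Mnorm(v^\alpha,c^\alpha)^2$. It says the superdifferential of $L_Q$ at $\alpha^\star$ contains the gradient of $g(\cdot,r)$ at $\alpha^\star$ for each maximizing $r$ of the form above, and the superdifferential is the convex hull of these gradients. Optimality of $\alpha^\star$ means $0$ lies in the superdifferential plus the normal cone of $\Delta_N$. The difficulty is that this yields a convex combination of several maximizers $\rho^\star$, not a single one. When maximizers are nonunique (ties, zero partial sums, $c_j=0$), I would exploit $k<d$ to replace that combination by a single rotated choice of the $\rho_j$ inside the free subspaces $S_j^\perp$ with the same inner products. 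If that fails, my fallback is to reparametrize by $s_i=\alpha_ir_i$, which makes the problem a convex--concave saddle problem in $(\alpha,s)$ with the jointly convex constraint $\norm{s_i}\le\alpha_i\eta_i$, and then apply Sion's theorem.
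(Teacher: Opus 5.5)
Your Steps 1 and 2 are correct. Reducing the reverse inequality to a saddle point $(\alpha^\star,r^\star)$ is also legitimate, since one exists (the paper's attainment argument constructs it).

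\textbf{The gap.} It lies in selecting $r^\star$ when the norm maximizer is not unique. This is exactly where $k<d$ enters and where the theorem fails at full rank, so it is the heart of the proof. Step 3 neither specifies the selection nor proves that any selection works. With an arbitrary maximizer, the claim that $\alpha^\star$ maximizes $g(\cdot,r^\star)$ is false even when $k<d$.

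Take $d=2$, $k=1$, $q_1=e_1$, $\lambda=1$, one record $1+\ip{r_1}{x}$ with $v_1=0$, $\eta_1=1$, $m_1=0$, and exact cuts $\pm x_1$. Then $L_Q(\alpha)=\alpha_1-\frac12(\alpha_1+|\alpha_2-\alpha_3|)^2$ has the unique maximizer $\alpha^\star=(1,0,0)$ with value $1/2$, and every unit vector of $\R^2$ is a norm maximizer. The choice $\rho_0^\star=e_1$ gives $\partial_{\alpha_3}g(\alpha^\star,r^\star)=1>0=\partial_{\alpha_1}g(\alpha^\star,r^\star)$. Indeed $\max_\alpha g(\alpha,r^\star)=\min_y\{\max(1+y,-y)+y^2/2\}=5/8$. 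Only $\rho_0^\star=\pm e_2$, the unused direction, gives $1/2$. This is the paper's full-rank example with one extra dimension: $k<d$ matters not because it makes unit vectors available, but because it supplies the right one.

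\textbf{Danskin and the fallback.} Danskin correctly yields only a mixture of maximizers (here weight $\frac12$ on each of $\pm e_1$). Replacing it by a single rotated choice ``with the same inner products'' is unproved, and exact matching is the wrong target. A genuine vector with the mixture's averaged retained coordinates and norm $\Mnorm(v^{\alpha^\star},c^{\alpha^\star})$ has tails $\norm{\Pi_{S_j^\perp}a}$ at least as large as, and typically strictly larger than, the averaged tails. The fallback fails outright. First, $g$ is concave, not convex, in $s$, which is the minimizing player's variable. Second, the constraint $\norm{s_i}\le\alpha_i\eta_i$ couples the players. Since $r$ is fixed before $\alpha$ in $\inf_r\max_\alpha g$, the substitution does not even reformulate the left-hand side.

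\textbf{The missing idea.} You need to absorb the mismatch by an inequality with a favorable sign, which is what the paper's tail lift does. Interchange the two infima (always valid) and minimize over residuals at fixed $x$, giving cuts $b_i+\ip{Qv_i}{x}-\eta_i\norm{\Pi_{S_{m_i}^\perp}x}$. Then replace the tail norms by variables $t_j$ subject to $\norm{(z_j,t_j)}\le t_{j-1}$ and $t_k\ge0$. Because $k<d$, every lift-feasible point completes to a genuine $x=Qz+w$ with $w\in S_k^\perp$, $\norm{x}=t_0$, and tails at least $t_j$. Tails enter every cut with a minus sign, so this cannot raise the objective, and the convex lift has the same value as the left-hand side. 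The paper then evaluates the lift by Slater duality and never forms a saddle problem in the residuals.

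\textbf{Closing your route.} Your Danskin argument can also be completed with this lemma. Let $\theta_\ell$ and $a^{(\ell)}$, each of norm $M=\Mnorm(v^{\alpha^\star},c^{\alpha^\star})$, form the Danskin mixture. Then $z=-\frac1\lambda\sum_\ell\theta_\ell Q^\top a^{(\ell)}$ and $t_j=\frac1\lambda\sum_\ell\theta_\ell\norm{\Pi_{S_j^\perp}a^{(\ell)}}$ are lift-feasible with epigraph value $\gamma=b^{\alpha^\star}-M^2/\lambda$. This follows from convexity of the norm, the bound $\ip{r_i^{(\ell)}}{a^{(\ell)}}\le\eta_i\norm{\Pi_{S_{m_i}^\perp}a^{(\ell)}}$, and the first-order conditions. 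The lifted objective equals $b^{\alpha^\star}-M^2/(2\lambda)=L_Q(\alpha^\star)$, so completion gives the reverse inequality. Residuals opposite to the tails of the completed point then give the saddle point you were seeking.
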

The proof is in Appendix~\ref{app:minimax-proof}.
Thus the optimized certificate is the largest lower bound valid for every
convex loss compatible with the stored minorant inequalities alone.

\paragraph{Computational cost and storage.}
For fixed weights and $k\ge1$, evaluating the baseline certificate
\eqref{eq:baseline-quadratic} costs $O(Nd)$ arithmetic operations.
Evaluating the compressed certificate \eqref{eq:baseline-quadratic-cert}
costs $O(Nk)$, including $O(k)$ for the recursion; forming each compressed
record from a new dense slope during the underlying optimization requires
an additional $O(dk)$ projection cost. Storing the cut archive requires
$O(Nd)$ numbers for the baseline and $O(Nk)$ for the compressed
representation, with an additional $O(dk)$ for the basis. Optimizing the
weights gives a quadratic program for the baseline and an SOCP for the
compressed certificate; their solution costs depend on the solver and
implementation.

\paragraph{Use with optimization methods.}
Compressed certificates can accompany any method that supplies valid
affine minorants of the convex loss, including gradient and quasi-Newton
methods. For methods such as L-BFGS that do not otherwise retain the full
slope history, compressed records provide an economical auxiliary archive
for certification. At each query, compute the intercept before compression
and store $(b_i,v_i,\eta_i,m_i)$ with a verified residual bound.
Any simplex weights are valid; they can optionally be optimized via
\eqref{eq:baseline-weight-master}. Evaluate
\eqref{eq:baseline-quadratic-cert} and use $U-L_Q(\alpha)$ as the stopping
gap. Compression is most useful when many slopes must be retained in a
large ambient dimension while their discovered span remains small.

\section{A cutting-plane method with adaptive dimension}
\label{sec:cutting-plane}
We now use the compressed records to choose queries and certify an averaged
output on a Euclidean ball. The method discovers directions as needed and
requires no function values or quadratic regularizer.

\paragraph{An averaged-point certificate.}
Let $X=R\Ball^d$, $R>0$, and let $F$ be convex with
$F_*=\inf_{x\in X}F(x)\in\R$. At each objective query
$x_i\in\operatorname{int}X$, the oracle returns a $\delta_i$-subgradient
$g_i$ of $F$ relative to $X$, satisfying
\begin{equation}
\begin{aligned}
 F(z)&\ge F(x_i)+\ip{g_i}{z-x_i}-\delta_i\quad(z\in X),\\
 0\le\delta_i&\le\delta,\qquad\norm{g_i}\le G,
\end{aligned}
 \label{eq:ball-oracle}
\end{equation}
where $G>0$. Let $S$ denote the current discovered space, which contains
the query point $x_i$, and let $Q$ be its orthonormal basis matrix.
Set $r_i:=g_i-\Pi_Sg_i$ and apply the discovery
and compression rule
\begin{equation}
\begin{gathered}
 Q\gets\begin{bmatrix}Q & \dfrac{r_i}{\norm{r_i}}\end{bmatrix}
 \quad\text{if }\norm{r_i}>\tau,\\
 (p_i,\eta_i)\gets
 \begin{cases}
  (g_i,0), & \norm{r_i}>\tau,\\
  (\Pi_Sg_i,\norm{r_i}), & \norm{r_i}\le\tau.
 \end{cases}
\end{gathered}
\label{eq:cp-compression}
\end{equation}
In both cases $\ip{g_i-p_i}{x_i}=0$.

As in the preceding section, store $(v_i,\eta_i,m_i)$, where $m_i$ is
the basis size at recording and $p_i=Qv_i$ in the current basis.
For simplex weights $\alpha$ on the retained records, let
$\bar x^\alpha:=\sum_i\alpha_i x_i$ and set
\[
 v^\alpha:=\sum_i\alpha_i v_i,\qquad
 c^\alpha_j:=\sum_{i:m_i=j}\alpha_i\eta_i.
\]
\begin{lemma}
\label{lem:ball-certificate}
The averaged point $\bar x^\alpha$ satisfies
\begin{gather}
 F(\bar x^\alpha)-F_*\le C(\alpha),\\
 C(\alpha):=\sum_i\alpha_i(\ip{p_i}{x_i}+\delta_i)
                  +R\Mnorm(v^\alpha,c^\alpha).
 \label{eq:ball-certificate}
\end{gather}
\end{lemma}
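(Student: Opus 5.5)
Fix an arbitrary $z\in X$; the goal is to show $F(\bar x^\alpha)-F(z)\le C(\alpha)$, and then take the infimum over $z$. First, the point $\bar x^\alpha$ is feasible because $X$ is convex, so Jensen's inequality gives $F(\bar x^\alpha)\le\sum_i\alpha_iF(x_i)$. Next, for each $i$ the oracle inequality \eqref{eq:ball-oracle} gives $F(x_i)-F(z)\le\ip{g_i}{x_i-z}+\delta_i$. Averaging these inequalities with the weights $\alpha$, and then splitting $g_i=p_i+(g_i-p_i)$, reduces the problem to
\[
 F(\bar x^\alpha)-F(z)\le\sum_i\alpha_i\bigl(\ip{p_i}{x_i}+\ip{g_i-p_i}{x_i}+\delta_i\bigr)-\ip{a^\alpha}{z},
\]
where $a^\alpha:=\sum_i\alpha_ig_i$. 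The middle term drops out because of the observation after \eqref{eq:cp-compression}: we have $\ip{g_i-p_i}{x_i}=0$. This holds since either $g_i=p_i$, or $g_i-p_i=r_i\perp S\ni x_i$.

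It then remains to show $-\ip{a^\alpha}{z}\le R\Mnorm(v^\alpha,c^\alpha)$. Cauchy--Schwarz together with $\norm{z}\le R$ bounds the left side by $R\norm{a^\alpha}$. I would then show that $a^\alpha\in Qv^\alpha+K(c^\alpha)$, with $Q$ the final basis, and invoke Lemma~\ref{lem:baseline-norm}, which gives $\norm{a^\alpha}\le\max_{r\in K(c^\alpha)}\norm{Qv^\alpha+r}=\Mnorm(v^\alpha,c^\alpha)$.

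The main obstacle is the membership claim, which amounts to bookkeeping about the recording dimensions. Consider a record with $\norm{r_i}>\tau$. There $p_i=g_i$ lies in the enlarged span, and $\eta_i=0$, so its residual contributes nothing, whatever $m_i$ is. Now consider a record with $\norm{r_i}\le\tau$. There $S=S_{m_i}$ at recording time, and $g_i-p_i=r_i\in S_{m_i}^\perp$ with $\norm{r_i}=\eta_i$. Hence $r_i\in\eta_i\mathcal B_{m_i}$. Because the basis is append-only, $p_i$ keeps the representation $Qv_i$ in every later basis, with zeros padded in the new coordinates. Summing with the weights then gives $\sum_i\alpha_ir_i\in\sum_j c^\alpha_j\mathcal B_j=K(c^\alpha)$, exactly as in the derivation preceding Procedure~\ref{proc:baseline-norm}.

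One subtlety must be checked: $m_i$ has to be the basis size at which the residual was measured. If the basis could be enlarged at step $i$ before recording $m_i$, then only records with $\eta_i=0$ are affected, and for those the convention is harmless. Combining the pieces and taking the supremum over $z\in X$ yields the stated bound $F(\bar x^\alpha)-F_*\le C(\alpha)$.
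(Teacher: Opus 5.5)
Your proof is correct and follows essentially the same route as the paper's. Both use Jensen's inequality together with the averaged $\delta_i$-subgradient inequalities, cancel the residuals via $\ip{g_i-p_i}{x_i}=0$, maximize the remaining linear term over the radius-$R$ ball, and apply Lemma~\ref{lem:baseline-norm} to the aggregate residual in $K(c^\alpha)$. Your check of the recording dimensions (discovery-triggering records have $\eta_i=0$, so their $m_i$ is immaterial) spells out what the paper states more briefly.
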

The proof is in Appendix~\ref{app:cp-validity}.
Minimizing the convex function $C$ over the simplex selects both the output
and its gap certificate; $\Mnorm$ is evaluated by
Procedure~\ref{proc:baseline-norm}.

\paragraph{The cutting-plane method.}
Our method uses an existing fixed-dimensional volumetric cutting-plane
method as a subroutine, such as Vaidya's method
\citep{anstreicher1997,jiang2020}.
These methods maintain polytopes, called \emph{localizers}, defined by
the retained cuts, and compute approximate volumetric centers at which
new cuts are requested. Cut insertion and constraint deletion guarantee
an overall reduction in localizer volume.

Our algorithm proceeds in phases. Within each phase, the cutting-plane
subroutine operates in a fixed \emph{working dimension} $q$.
We use the following interface.

\begin{assumption}[Fixed-dimensional cutting-plane primitive]
\label{ass:primitive}
Starting from $[-R,R]^q$, the primitive processes nonzero central cuts,
possibly retaining outward relaxations, and keeps $O(q)$ constraints.
Within $O(q\log(2+qR/r))$ updates, its localizer has volume smaller than
a $q$-dimensional radius-$r$ ball.
\end{assumption}

Let $m$ denote the number of basis vectors discovered so far and let
$y\in\R^q$ be the center returned by the primitive.
Only the first $m$ working coordinates correspond to discovered basis
vectors; the remaining coordinates are reserved for new directions.
The unassigned coordinates of $y$ remain zero, a property ensured by
the symmetry of the volumetric barrier and the standard constraint-deletion
rule; see Appendix~\ref{app:primitive}.
Thus the ambient query $x_i=Qy_{1:m}$ lies in the current discovered space.
When a new direction is discovered, it occupies a reserved coordinate.
If none remains, the algorithm doubles the working dimension,
clipped at $d$, and starts a new phase, keeping the basis and
discarding the previous phase's records.

Algorithm~\ref{alg:adaptive-cp} uses this primitive, denoted by
$\mathrm{CP}$. The primitive receives central cuts $\ip{a}{z-y}\le0$,
possibly retaining outward relaxations. Let $P$ index the retained
objective records and $\Delta_P$ be their simplex. Records
are linked to their cuts and removed when those constraints are deleted.
Their old coordinates are padded with zeros as the basis grows, while
their recording dimensions remain fixed.

\begin{algorithm}[tb]
\caption{Dimension-adaptive cutting-plane method}
\label{alg:adaptive-cp}
\small
\algtext*{EndIf}
\algtext*{EndLoop}
\begin{algorithmic}[1]
\Require $R,\delta,G$, discovery threshold $\tau$, tolerance $\zeta$
\State Initialize an empty basis and record set; set $q\gets1$
\State Initialize CP from $[-R,R]^q$
\Loop
  \State Obtain a CP center $y$ and synchronize the records
  \State $\alpha\gets\argmin_{\beta\in\Delta_P}C(\beta)$
  \If{$C(\alpha)\le\delta+R\tau+\zeta$}
    \State \Return $(\bar x^\alpha,C(\alpha))$
  \EndIf
  \If{$\norm{y}\ge R$}
    \State Add a ball-separation cut; \textbf{continue}
  \EndIf
  \State Query $(g_i,\delta_i)$ at $x_i=Qy_{1:m}$
  \State Apply \eqref{eq:cp-compression} to $g_i$
  \If{$m>q$}
    \State $q\gets\min\{2q,d\}$; restart the phase; \textbf{continue}
  \EndIf
  \State \textbf{if} $p_i=0$, \Return $(x_i,\delta_i+R\eta_i)$
  \State Store the record and add its projected central cut
\EndLoop
\end{algorithmic}
\end{algorithm}
Weight selection and the stopping test are skipped if no records remain.
Synchronizing records means removing those whose
cuts have been deleted. Ball-separation cuts require no objective call.
Appendix~\ref{app:cp-algorithm} gives the detailed pseudocode.

\begin{theorem}
\label{thm:cp}
Under Assumption~\ref{ass:primitive}, for $0<\zeta\le RG$ and
$0\le\tau\le G$, Algorithm~\ref{alg:adaptive-cp}, with exact weight
selection, returns an averaged point and certificate satisfying
\begin{equation}
 F(\bar x^\alpha)-F_*\le C(\alpha)\le\delta+R\tau+\zeta.
 \label{eq:cp-accuracy}
\end{equation}
If it discovers $m\ge1$ directions, then
\begin{equation}
\begin{aligned}
 N_{\mathrm{oracle}}&=O\!\left(m\log\left(2+\frac{mRG}{\zeta}\right)\right),\\
 \text{storage}&=O(dm+m^2)\text{ real numbers}.
\end{aligned}
 \label{eq:cp-cost}
\end{equation}
\end{theorem}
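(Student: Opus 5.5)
We split the argument into three parts: correctness of the returned certificate, an oracle-count bound per phase, and a summation over phases together with the storage count.

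\textbf{Correctness.} There are two return points in Algorithm~\ref{alg:adaptive-cp}.

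At the stopping test, Lemma~\ref{lem:ball-certificate} gives $F(\bar x^\alpha)-F_*\le C(\alpha)$. The test itself ensures $C(\alpha)\le\delta+R\tau+\zeta$, so \eqref{eq:cp-accuracy} holds directly.

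At the early return with $p_i=0$, compression has not added a direction, so $\norm{r_i}\le\tau$ and $\eta_i=\norm{g_i}\le\tau$. The oracle inequality \eqref{eq:ball-oracle} then gives, for all $z\in X$,
\[
F(x_i)\le F(z)+\ip{g_i}{x_i-z}+\delta_i .
\]
Since $\ip{g_i}{x_i}=\ip{g_i-p_i}{x_i}=0$ and $|\ip{g_i}{z}|\le R\eta_i$, we get $F(x_i)-F_*\le\delta_i+R\eta_i\le\delta+R\tau$. This is the single-record case of the lemma.

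\textbf{Oracle count in one phase.} Fix a phase with working dimension $q$. We argue that if the phase has neither stopped nor restarted, the localizer still contains a $q$-dimensional ball of radius $\rho\asymp\zeta/G$ in working coordinates. Then Assumption~\ref{ass:primitive} caps the number of updates in the phase at $O(q\log(2+qRG/\zeta))$.

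The key step is the contrapositive, which we expect to be the main obstacle. Suppose no stop occurred, so $C(\alpha)>\delta+R\tau+\zeta$ for every $\alpha\in\Delta_P$. We need to exhibit a point deep inside the localizer. Our plan is a minimax/duality argument:
\begin{itemize}
\item Write $\min_\alpha C(\alpha)$ as a saddle problem, using $R\Mnorm(v,c)=\max_{r\in K(c)}R\norm{Qv+r}$ (Lemma~\ref{lem:baseline-norm}) and $R\norm{w}=\max_{\norm{z}\le R}\ip{w}{z}$.
\item Restrict to $z$ in the discovered span; the residual balls then contribute at most $R\eta_i\le R\tau$ in total.
\item Swap min and max by Sion's theorem. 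This produces a point $z^\star$, essentially $-z$ in the working coordinates, with $\norm{z^\star}\le R$ and $\ip{p_i}{x_i-z^\star}>\zeta$ for every retained record $i$.
\item Shrink $z^\star$ toward the origin and take a ball of radius about $\zeta/(2G)$ around it. Because $\norm{p_i}\le G$, every projected objective cut $\ip{p_i}{z-x_i}\le0$ remains strictly satisfied on that ball, and the ball-separation cuts hold once $z^\star$ is shrunk slightly.
\end{itemize}
One gap remains: cuts deleted by the primitive, or kept only through outward relaxations, only enlarge the localizer, so they cannot destroy the ball. However, the records of deleted cuts are also dropped, so the ball built from the current records does not automatically lie inside every cut that was ever added. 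We would resolve this by citing the primitive's guarantee, which states the volume bound in terms of the currently retained constraints.

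Since zero unassigned coordinates keep the ball inside $[-R,R]^q$, the ball has volume larger than a radius-$\rho$ ball, contradicting the assumption once the update budget is exhausted. Ball-separation cuts also count as updates, but they cost no oracle calls, so the oracle count is at most the update count.

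\textbf{Summation and storage.} The working dimensions form the sequence $q=1,2,4,\dots$, capped at $d$. If the algorithm discovers $m$ directions, the final $q$ is at most $\min\{2m,d\}$. Phases end only on a restart or a stop, and each restart costs one extra oracle call.

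Summing the per-phase bounds geometrically gives
\[
\sum_\ell O\!\left(2^\ell\log\bigl(2+2^\ell RG/\zeta\bigr)\right)=O\!\left(m\log(2+mRG/\zeta)\right).
\]
The hypotheses $\zeta\le RG$ and $\tau\le G$ keep the logarithms at least a constant and make the radius $\rho\asymp\zeta/G$ meaningful.

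Storage consists of three pieces. The basis $Q$ uses $dm$ numbers. The primitive keeps $O(q)=O(m)$ constraints in $\R^q$, costing $O(m^2)$. The retained records are $O(m)$ in number, each holding $O(m)$ coordinates. The total is $O(dm+m^2)$.
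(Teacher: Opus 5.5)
Your proposal is correct and follows the paper's proof essentially step for step. Your residual step is the paper's triangle bound $C(\alpha)\le C_{\mathrm{tri}}(\alpha)$, which turns a failed stopping test into a projected bilinear certificate exceeding $\zeta$; minimax, a shrunken ball of radius of order $\zeta/G$, Assumption~\ref{ass:primitive}, and a geometric sum over the doubling phases then give the bounds as in the paper, and your deleted-cut worry is resolved as you suggest, because records are removed together with their cuts and the localizer consists only of box facets, ball-separation cuts, and the retained productive cuts or their outward relaxations, all of which your ball satisfies.
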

The proof and an extension to approximate weight selection are in
Appendix~\ref{app:cp-proof}. These are exact-arithmetic bounds;
$N_{\mathrm{oracle}}$ counts objective-oracle calls.

\paragraph{Low-dimensional gradient structure.}
The cost depends on the directions actually discovered, which can be
bounded using exact or approximate structure in the observed gradients.
\begin{corollary}
\label{cor:width}
If the returned gradients lie in a fixed $k$-dimensional space, then
$m\le k$. If $\tau>0$, $k\ge1$, and every discovery-triggering gradient
is within $\eta\le\tau/2$ of such a space, then
\begin{equation}
 m\le\min\{d,\lceil3k(1+\log(G/\tau))\rceil\}.
 \label{eq:width-bound}
\end{equation}
\end{corollary}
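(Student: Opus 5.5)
The plan is to track how the discovered basis sits relative to the low-dimensional space. For the exact statement, let $V$ be the fixed $k$-dimensional space containing every returned gradient. I will show by induction on discoveries that every discovered basis vector lies in $V$. Suppose $q_1,\ldots,q_{j-1}\in V$, so the current space $S=\spanop(q_1,\ldots,q_{j-1})$ is a subspace of $V$. A discovery-triggering gradient $g_i\in V$ then satisfies $\Pi_S g_i\in S\subseteq V$, so $r_i=g_i-\Pi_Sg_i\in V$, and the appended vector $r_i/\norm{r_i}$ lies in $V$. The discovered vectors are therefore orthonormal vectors in a $k$-dimensional space, which gives $m\le k$. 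Restarts do not matter here, because the basis is kept across phases.

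For the approximate statement, write each discovery-triggering gradient as $g_j=w_j+e_j$ with $w_j\in V$ and $\norm{e_j}\le\eta\le\tau/2$. Let $q_j=r_j/\norm{r_j}$ be the vector it appends, where $\norm{r_j}>\tau$, and set $u_j:=\Pi_Vq_j$. Since $q_j\perp S_{j-1}$ and $r_j=(I-\Pi_{S_{j-1}})g_j$, we have $\ip{q_j}{g_j}=\norm{r_j}$. Hence
\[
\ip{u_j}{w_j}=\ip{q_j}{w_j}\ge\norm{r_j}-\tau/2\ge\norm{r_j}/2 .
\]
Because the $q_j$ are orthonormal, $\sum_j u_ju_j^\top\preceq\Pi_V$. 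Taking traces gives $\sum_j\norm{u_j}^2\le k$, and taking determinants on $V$ gives a log-det bound for $W_t:=\eps\Pi_V+\sum_{j\le t}u_ju_j^\top$, namely $\log\det_V W_m\le k\log(1+\eps)$. The crude route uses $\norm{u_j}\ge\norm{r_j}/(2\norm{w_j})\ge\tau/(3G)$, but this only yields $m\le 9kG^2/\tau^2$. To obtain the logarithmic bound, I will instead run an elliptical-potential argument with $\Phi_t=\log\det_V W_t$, whose increments are $\log(1+u_t^\top W_{t-1}^{-1}u_t)$.

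The main obstacle is the lower bound $u_t^\top W_{t-1}^{-1}u_t\ge c$ for an absolute constant $c$, at least for all but $O(k)$ steps per dyadic scale. I will first group discoveries by scale, putting $\norm{r_j}\in[2^s\tau,2^{s+1}\tau)$, which gives about $\log_2(G/\tau)+1$ groups since $\norm{r_j}\le G$. Within a group, I will use two facts. First, $w_t$ is nearly orthogonal to all earlier $q_i$ in the ambient space, because $\norm{(I-\Pi_{S_{t-1}})w_t}\le\norm{r_t}+\tau/2$ while $\ip{q_t}{w_t}\ge\norm{r_t}/2$. Second, the Cauchy--Schwarz identity $\ip{u_t}{w_t}^2\le(u_t^\top W_{t-1}^{-1}u_t)(w_t^\top W_{t-1}w_t)$ holds. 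I expect to bound $w_t^\top W_{t-1}w_t$ by roughly $\eps\norm{w_t}^2+\norm{\Pi_{S_{t-1}}w_t}^2$. The difficulty is that $\Pi_{S_{t-1}}w_t$ need not be small, so a careful choice of $\eps$ at each scale (of order $4^s\tau^2/G^2$) will be needed to balance the terms. Summing over scales, each contributing $O(k)$ discoveries after using $\log\det_V W_m\le k\log(1+\eps)$ against the initial value $k\log\eps$, gives $m\le 3k(1+\log(G/\tau))$ up to the constant I still need to verify. Combining this with $m\le d$, since the $q_j$ are orthonormal in $\R^d$, gives \eqref{eq:width-bound}.
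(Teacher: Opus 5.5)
Your exact-rank argument is correct and amounts to the paper's observation that the discovery gradients are linearly independent vectors of the $k$-dimensional space. The bound $m\le d$ is also fine. The approximate bound, however, is not established. The step you flag as the main obstacle is left open, and the intermediate claim you are aiming for is false, so the elliptical-potential route cannot be completed as planned.

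Your Cauchy--Schwarz step gives exactly $w_t^\top W_{t-1}w_t=\eps\norm{w_t}^2+\norm{\Pi_{S_{t-1}}w_t}^2$. The second term can be of order $G^2$ whenever a discovery gradient has a large component in the already-discovered span and only a $\tau$-sized new residual. Tuning $\eps$ per dyadic scale of $\norm{r_t}$ does not affect that term. Your ``first fact'' does not help either: the two inequalities you cite constrain only $(I-\Pi_{S_{t-1}})w_t$, not $\Pi_{S_{t-1}}w_t$.

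Concretely, take $k=1$, $V=\spanop(e_0)$, $\eta=\tau/2$, and discovery gradients $g_t=\alpha_te_0+\eta f_t$ with $f_1,f_2,\ldots$ orthonormal and orthogonal to $e_0$. With $\rho_t^2:=\norm{(I-\Pi_{S_{t-1}})e_0}^2$, one finds $\norm{r_t}^2=\alpha_t^2\rho_t^2+\eta^2$ and $\rho_{t+1}^2=\rho_t^2\eta^2/\norm{r_t}^2<\rho_t^2/4$. Choosing $\alpha_t\le G$ so that $\norm{r_t}$ stays just above $\tau$ yields about $\log_2(G/\tau)$ discoveries, all in one dyadic scale. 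Yet $W_{t-1}=\eps+1-\rho_t^2$, so $u_t^\top W_{t-1}^{-1}u_t=(\rho_t^2-\rho_{t+1}^2)/(\eps+1-\rho_t^2)<\tfrac43\,4^{-(t-1)}$ for every $t\ge2$ and every $\eps>0$. The potential $\log\det_V W_t$ is spent almost entirely on the first discovery. The later discoveries are paid for by the geometric shrinkage of the unexplored part $\rho_t$ of $V$, which is bounded below by about $\tau/G$ because $\alpha_t\le G$. Your potential does not register that shrinkage.

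The missing idea is to measure volume in the span of the discovery gradients themselves rather than inside $V$. Because the basis spans exactly the earlier discovery gradients, $\norm{r_j}=\dist(g_j,\spanop(g_1,\ldots,g_{j-1}))$. So for $A=[g_1\ \cdots\ g_m]$, Gram--Schmidt gives $\det(A^\top A)=\prod_j\norm{r_j}^2>\tau^{2m}$. This quantity is blind to old-span components, which is precisely what defeated your Cauchy--Schwarz step.

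For the upper bound, let $s_1\ge\cdots\ge s_m$ be the singular values of $A$. They satisfy $\sum_js_j^2\le mG^2$ and, by Eckart--Young against the rank-$k$ matrix $\Pi_VA$, $\sum_{j>k}s_j^2\le m\eta^2$. Applying AM--GM separately to the top $k$ and the remaining $m-k$ squared singular values gives $\tau^{2m}<(mG^2/k)^k\bigl(m\eta^2/(m-k)\bigr)^{m-k}$ for $m>k$. Now use $\eta\le\tau/2$ and set $u=m/k$. Taking logarithms and using $(u-1)\log\bigl(u/(u-1)\bigr)\le1$ gives $2(u-1)\log2<2\log(G/\tau)+\log u+1$. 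Then $\log u\le u/2$ yields $u<3(1+\log(G/\tau))$, the stated constant.
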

The proof is in Appendix~\ref{app:width}.
For $0<\eps\le RG$, taking $\tau=\eps/(2R)$ and $\zeta=\eps/2$ gives accuracy
$\delta+\eps$ with approximation scale $\eps/(4R)$; exact structure
permits $\tau=0$ and $\zeta=\eps$.
Gradient components already in the query space create no new directions,
as with a quadratic regularizer.
Appendix~\ref{app:width} discusses this case and why the approximation
scale must be smaller than the discovery threshold.

\section{Experiments}
\label{sec:experiments}
The code for both experiments is publicly available at
\url{https://github.com/egorgladin/adaptive-certificates}.
\subsection{Compressed certificates with limited communication}
\label{sec:channel-experiments}
We study whether compressed certificates can provide useful stopping rules
when communication is expensive. The motivating setting is a GPU worker
that generates optimization queries and sends records to a separate
certification service over a channel with limited effective bandwidth.
We emulate this constraint by adding a delay proportional to each message's
size. The experiment tests the cost of certification along a given
optimization trajectory.

\begin{table*}[!t]
\centering\small
\caption{Stopping at $U-L\le10^{-3}F(0)$ with an emulated certificate
channel of 1 MiB/s. Times are medians of three paired repetitions.
$N_{\rm F}/N_{\rm C}$ counts exact function--subgradient queries,
$k$ is the final compressed basis dimension, and $S_{\rm F}/S_{\rm C}$
is the ratio of total communicated bytes, including common scalars.
Single-cut does not reach the target on any instance within 2000 queries.}
\label{tab:channel-main}
\setlength{\tabcolsep}{5pt}
\begin{tabular}{llrrrcrr}
\toprule
Data & Method & Full (s) & Comp. (s) & Saving & $N_{\rm F}/N_{\rm C}$ & $k$ & $S_{\rm F}/S_{\rm C}$\\
\midrule
\multirow{3}{*}{Abalone} & SG & 10.17 & 3.96 & 61.1\% & 384/385 & 83 & 26.9\\
 & Mom & 2.27 & 0.80 & 64.7\% & 96/96 & 70 & 41.9\\
 & Ada & 4.68 & 1.77 & 62.1\% & 192/192 & 105 & 26.7\\
\midrule
\multirow{3}{*}{Airfoil} & SG & 11.32 & 5.49 & 51.5\% & 597/597 & 76 & 14.6\\
 & Mom & 1.45 & 0.87 & 40.2\% & 96/96 & 64 & 22.3\\
 & Ada & 2.47 & 1.24 & 49.7\% & 160/160 & 62 & 19.7\\
\midrule
\multirow{3}{*}{Adult} & SG & 16.32 & 14.97 & 8.3\% & 612/714 & 256 & 8.2\\
 & Mom & 3.89 & 1.68 & 56.9\% & 160/160 & 160 & 24.0\\
 & Ada & 7.37 & 4.28 & 42.0\% & 306/320 & 256 & 12.3\\
\midrule
\multirow{3}{*}{Breast Cancer} & SG & 3.37 & 2.21 & 34.4\% & 376/376 & 59 & 8.8\\
 & Mom & 2.18 & 1.59 & 27.1\% & 246/246 & 57 & 8.7\\
 & Ada & 12.77 & 8.86 & 30.6\% & 850/850 & 73 & 5.4\\
\bottomrule
\end{tabular}
\end{table*}

\paragraph{Problems and protocol.}
We use RBF $\nu$-SVR on Abalone and Airfoil, and RBF hinge-loss SVM on Adult
and Breast Cancer Wisconsin (Diagnostic)
\citep{abalone_data,airfoil_data,adult_data,breast_cancer_data}.
Both objectives have a quadratic regularizer and an unregularized intercept,
which is minimized out at each query. The RBF width is fixed to the median
pairwise training distance. We retain the full kernel factor: compression
is applied to certificate records, without approximating the training
objective.

The query methods are subgradient descent (SG), momentum (Mom), and
diagonal AdaGrad (Ada). We compare three stopping certificates:
\emph{Single-cut}, the best single-cut strong-convexity bound;
\emph{full}, the optimized aggregate of all full slopes; and
\emph{compressed}, the historical-residual certificate
\eqref{eq:baseline-quadratic-cert} with weights selected by
\eqref{eq:baseline-weight-master}.
Both archive masters use all recorded cuts, without working sets.
The certificates do not affect the queries, and all variants retain the
single-cut bound. We stop when the incumbent gap satisfies
$U-L\le\eps=10^{-3}F(0)$, with a cap of 2000 function--subgradient queries.
Model and method parameters are selected before the main timing runs;
Appendix~\ref{app:channel-experiments} gives the details.

\begin{figure*}[!t]
\centering
\includegraphics[width=\textwidth]{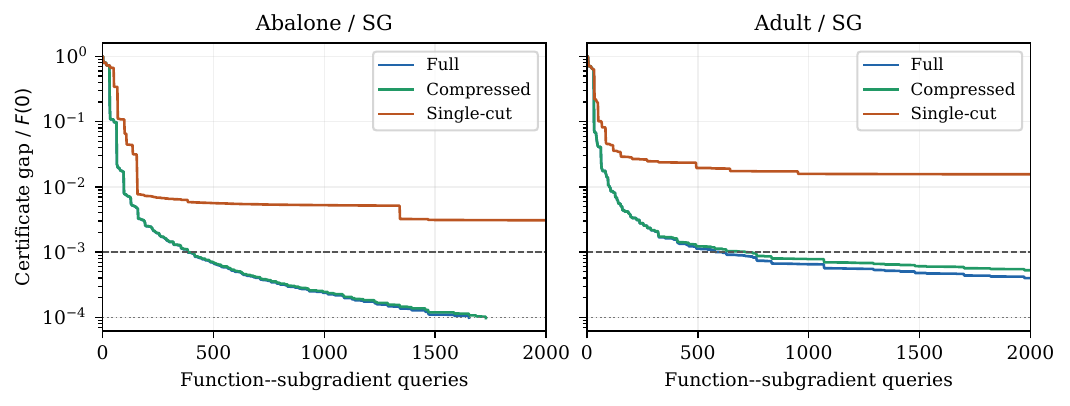}
\caption{Certificate gaps along continued subgradient trajectories.
Full and compressed reach the main target (dashed line) with similar
query counts, while single-cut remains substantially weaker. The dotted line
marks a target ten times tighter: both archives reach it on Abalone,
while neither reaches it on Adult within 2000 queries.}
\label{fig:channel-gaps}
\end{figure*}

\paragraph{Results.}
At 1 MiB/s, compressed reaches the target faster than full in all twelve
problem--method combinations, saving 8.3--64.7\% of the median time
(Table~\ref{tab:channel-main}). All 36 paired timing comparisons favor
compression. Total communication decreases by factors of 5.4--41.9,
while query counts agree in nine combinations. The largest query overhead
is on Adult/SG: 714 queries instead of 612.

The archive matters for certificate strength
(Figure~\ref{fig:channel-gaps}). For example, after 2000 queries on
Adult/SG, the incumbent value is 0.2853, close to an independently
computed feasible reference value of 0.2851, but the single-cut gap is
0.0078, compared with a target of 0.00050.
Aggregating slopes supplies a substantially stronger stopping bound.

\paragraph{Where the gain comes from.}
Communication savings can offset projection, weight selection, and
additional queries. On Adult/SG, compression saves about 8.40 seconds of
channel waiting while adding 7.05 seconds of other work, yielding a net
benefit even with the largest increase in query count.
On Abalone/SG, communication savings account for 5.79 seconds, while
the remaining work is also slightly cheaper.
The historical correction keeps the compressed gaps close to the full
gaps as the archive grows, so reducing communication does not require
giving up the prescribed accuracy. These results demonstrate a useful
role for the theory as a certificate layer for several standard
first-order methods. Bandwidth and accuracy sensitivity are reported
in Appendix~\ref{app:channel-experiments}.

\subsection{A numerical illustration of dimension adaptation}
\label{sec:adaptive-cp-experiment}
We illustrate the theoretical contribution of Theorem~\ref{thm:cp}: an
oracle bound governed by discovered dimension, together with a computable
accuracy certificate. A controlled synthetic problem lets us vary ambient
and hidden dimensions independently and check the certificate against a
known optimum. We therefore compare coordinate choices for the same
cutting-plane primitive and report objective-oracle calls, the complexity
measure bounded by the theorem.

\paragraph{Problem and comparisons.}
Inspired by the nonsmooth example in
\citet[Section~3.2.1]{nesterov2018}, we embed a maximum of coordinates
in an unknown subspace and normalize its scale.
Let $U=[u_1,\ldots,u_k]\in\R^{d\times k}$ have orthonormal columns. We solve
\begin{equation}
 \begin{gathered}
  \min_{\norm{x}\le2}F_{d,k}(x),\\
  F_{d,k}(x)=\sqrt{k}\max_{1\le j\le k}\ip{u_j}{x}
                         +\frac12\norm{x}^2.
 \end{gathered}
 \label{eq:cp-experiment-objective}
\end{equation}
The normalization gives $x_*=-U\ind/\sqrt{k}$, $\norm{x_*}=1$,
$F_*=-1/2$, and $F(0)-F_*=1/2$ for every $d,k$.
The oracle returns the ambient subgradient $g(x)=\sqrt{k}u_{j_*}+x$,
choosing the first maximizing index. The term $x$ lies in the query space,
so the adaptive method discovers at most $k$ directions.

All three variants use a central-cut volumetric primitive with
minimum-leverage deletion \citep{anstreicher1998,blanchard2023} and the
averaged-point certificate \eqref{eq:ball-certificate}.
\emph{Adaptive} follows Algorithm~\ref{alg:adaptive-cp} without knowing $U$;
\emph{known subspace} is given $U$ and works in $k$ coordinates;
\emph{ambient} works in the original $d$ coordinates without compression.
We vary $d\in\{64,256,1024\}$ and $k\in\{2,4,8,16\}$ at $\eps=10^{-4}$,
then vary $\eps$ from $10^{-2}$ to $10^{-6}$ at $d=256,k=8$.
The ambient comparison uses $d\in\{16,32,64\}$ and $k=4$.
Appendix~\ref{app:cp-experiment} gives the protocol and complete counts.

\begin{figure*}[!t]
 \centering
 \includegraphics[width=\textwidth]{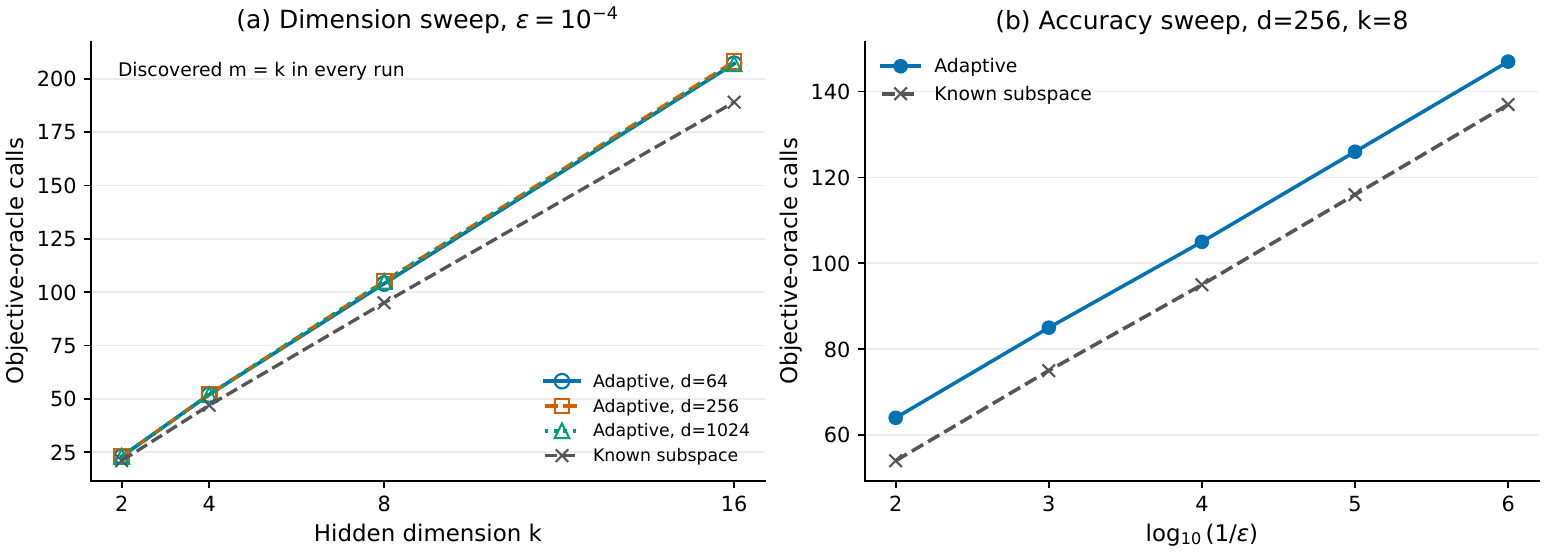}
 \caption{Objective-oracle calls to certified accuracy.
 (a) Changing the ambient dimension has almost no effect on the adaptive
 counts; all runs discover $m=k$ directions.
 (b) At $d=256,k=8$, each tenfold decrease in $\eps$ adds 20--21 calls.
 The known-subspace variant uses the same primitive with $U$ supplied in
 advance.}
 \label{fig:adaptive-cp-experiment}
\end{figure*}

\paragraph{Results.}
All 31 runs reach every requested accuracy, giving 43 certified endpoints.
In every adaptive run, $m=k$. At fixed $k$, increasing $d$ from 64 to 1024
changes the call count by at most one
(Figure~\ref{fig:adaptive-cp-experiment}(a)).
Discovering the space costs about 10\% more calls than being given it.
The accuracy sweep is consistent with logarithmic dependence on
$1/\eps$ (Figure~\ref{fig:adaptive-cp-experiment}(b)).
Against the ambient variant, adaptation reduces calls by factors of
2.58, 4.83, and 9.08 at $d=16,32,64$, respectively
(Table~\ref{tab:cp-ambient}).
Every checked averaged output satisfies $F(\bar x)-F_*\le C$, and every
stopping certificate satisfies $C\le\eps$.
These observations illustrate dimension adaptation and certified stopping
in the exact-rank setting; they do not establish the benefit of the
historical correction when discarded components are appreciable.

\begin{table}[!tb]
 \centering\small
 \caption{Calls to certify $\eps=10^{-4}$ at $k=4$, using the same
 volumetric primitive in three coordinate systems.}
 \label{tab:cp-ambient}
 \begin{tabular}{rrrr}
  \toprule
  $d$ & Known subspace & Adaptive & Ambient\\
  \midrule
  16 & 47 & 52 & 134\\
  32 & 47 & 52 & 251\\
  64 & 47 & 52 & 472\\
  \bottomrule
 \end{tabular}
\end{table}

\newpage
\balance
\section{Discussion}
The information retained with a compressed minorant determines what it can
certify about the ambient problem. For nested spaces and a known quadratic
regularizer, the omitted norm and recording dimension give a compact description
of each raw residual. The backward recursion converts that description into
a global lower bound; comparing it with an incumbent value gives a stopping
gap. When an unused ambient direction remains, optimizing the weights
exhausts the information in the minorant records alone.

The certificates can be evaluated independently of the query rule.
Our adaptive cutting-plane method gives an explicit way to couple
certification with an oracle bound governed by discovered dimension,
under the stated fixed-dimensional volume-reduction assumption.

The communication experiments show that compressed records can reduce
transmission costs and stopping time while retaining useful aggregate
certificates along first-order optimization trajectories. The gain
depends on bandwidth, target accuracy, and the costs of projection
and weight selection. A separate numerical experiment illustrates
dimension adaptation: oracle counts depend primarily on the discovered
dimension, with a modest overhead relative to a known-subspace method.
That experiment uses exact-rank gradients. Tests on approximately
low-dimensional trajectories, larger problems, and actual distributed
deployments would further clarify the practical scope of the results.

\clearpage
\nobalance
\subsection*{AI use statement}
Generative AI assisted with language editing, literature searches, proof development, and the implementation of experiments. All AI-assisted mathematical and experimental content was independently verified, and the author takes full responsibility for the manuscript.
\bibliography{references}
\clearpage
\appendix
\onecolumn
\aistatstitle{Dimension-Adaptive Accuracy Certificates for Convex Optimization: Appendices}
\section{Detailed proofs and sharpness examples}
\label{app:proofs}
\subsection{Proof of the robust aggregate certificate}
\label{app:robust-proof}
\begin{proof}[Proof of Theorem~\ref{thm:baseline-robust}]
For each record, choose the fixed actual residual $r_i\in K_i$ from
\eqref{eq:baseline-record}. With $r^\alpha=\sum_i\alpha_i r_i\in K^\alpha$,
the weighted minorants give
$h(x)\ge b^\alpha+\ip{p^\alpha+r^\alpha}{x}$ for every $x\in X$.
Since $X\subseteq B$,
\begin{align}
 F_*&\ge b^\alpha+\inf_{x\in X}
       \{\psi_0(x)+\ip{p^\alpha+r^\alpha}{x}\}\\
    &\ge b^\alpha-\Psi^*(-p^\alpha-r^\alpha)\\
    &\ge b^\alpha-\sup_{r\in K^\alpha}\Psi^*(-p^\alpha-r)
       =L(\alpha).
\end{align}
Subtracting this bound from $U\ge F(\widehat x)$ proves \eqref{eq:baseline-gap}.
The argument holds for each realized collection of valid records and
every simplex vector, regardless of how they were selected.
\end{proof}

\subsection{Proof of Lemma~\ref{lem:baseline-norm}}
\label{app:baseline-norm-proof}
\begin{proof}
For $j=0,\ldots,k$, define the compact sets
\[
 T_j=\sum_{\ell=j+1}^k v_\ell q_\ell+
     \sum_{\ell=j}^k c_\ell\mathcal B_\ell
     \subseteq S_j^\perp.
\]
We show by backward induction that $u_j$ in
Procedure~\ref{proc:baseline-norm} is the largest norm in $T_j$.
At $j=k$, the set is $c_k\mathcal B_k$, whose largest norm is $c_k$
if $k<d$ and zero if $k=d$.

Suppose the claim holds at $j\ge1$. Since $q_j$ is orthogonal to $T_j$,
the largest norm in $v_jq_j+T_j$ is $\sqrt{v_j^2+u_j^2}$.
Moreover,
\[
 T_{j-1}=v_jq_j+T_j+c_{j-1}\mathcal B_{j-1}.
\]
The first two terms lie in $S_{j-1}^\perp$, so the last ball can align
with a maximizing vector and increase its norm by $c_{j-1}$.
If that vector is zero, any vector of norm $c_{j-1}$ in this ball
attains the same value. The triangle inequality gives the matching upper
bound, proving the recursive step. Finally, $T_0=Qv+K(c)$, so its largest
norm is the returned value $u_0=\Mnorm(v,c)$, as claimed.
\end{proof}

\subsection{Proof of Theorem~\ref{thm:baseline-quadratic}}
\label{app:baseline-quadratic-proof}
\begin{proof}
Take $K_i=\eta_i\mathcal B_{m_i}$. Grouping these balls by birth dimension
gives $K^\alpha=K(c^\alpha)$, while $p^\alpha=Qv^\alpha$.
The conjugate of $\lambda\norm{x}^2/2$ is $\norm{y}^2/(2\lambda)$,
so Theorem~\ref{thm:baseline-robust} and Lemma~\ref{lem:baseline-norm}
give \eqref{eq:baseline-quadratic-cert}.

For completeness, writing the Euclidean norm as a maximum of inner
products gives
\begin{equation}
 \Mnorm(v,c)=\max_{\norm{z}\le1}
 \left\{\ip{Qv}{z}+\sum_{j=0}^k c_j\norm{\Pi_{S_j^\perp}z}\right\}.
 \label{eq:norm-support}
\end{equation}
For fixed $z$, each residual ball contributes its support value
$c_j\norm{\Pi_{S_j^\perp}z}$. The displayed expression is a maximum
of linear functions of $(v,c)$, so $\Mnorm$ is convex and nonnegative;
its square is convex as well.

For fixed weights, the constraints in \eqref{eq:baseline-weight-master}
bound each $u_j$ below by its value in Procedure~\ref{proc:baseline-norm},
as follows by backward induction. Choosing exactly those values is
feasible. Since the objective is increasing in $u_0\ge0$, minimizing
over $u$ gives
$\Mnorm(v^\alpha,c^\alpha)^2/(2\lambda)-b^\alpha=-L_Q(\alpha)$.
Minimizing over the weights therefore gives the negative of the best
certificate. The norm constraints are second-order cones, and a rotated
second-order cone represents the quadratic objective's epigraph.
\end{proof}

\subsection{Proof of record-only minimax sharpness}
\label{app:minimax-proof}
We prove Theorem~\ref{thm:baseline-minimax} for $k<d$, allowing the
more general residual sets $K(c^{(i)})$ with
$v^{(i)}\in\R^k$ and $c^{(i)}\in\R_+^{k+1}$.
For simplex weights $\alpha$, write
$\beta=\sum_i\alpha_i b_i$, $v=\sum_i\alpha_i v^{(i)}$, and
$c=\sum_i\alpha_i c^{(i)}$, and set
$L_Q(\alpha)=\beta-\Mnorm(v,c)^2/(2\lambda)$.
The raw records in the theorem correspond to $v^{(i)}=v_i$ and
$c^{(i)}=\eta_i e_{m_i}$, where $e_{m_i}$ is the $m_i$th coordinate
vector of $\R^{k+1}$, indexed from zero. Then
$K(c^{(i)})=\eta_i\mathcal B_{m_i}$ and
$(\beta,v,c)=(b^\alpha,v^\alpha,c^\alpha)$.
The proof has three steps. First minimize over
the residuals at a fixed ambient point. Then show that the convex tail lift
has the same value as this completion problem. Finally evaluate the lift's
dual. This route avoids interchanging an infimum and a supremum over
residuals. Such an interchange would generally be unjustified: the norm
penalty in the certificate is convex in the residual, and the completion
problem is not a convex minimization in the residuals and the point jointly.

\paragraph{Step 1: minimize over completions at a fixed point.}
Let $V$ denote the smallest optimum among independent completions of
these records, as on the left side of \eqref{eq:baseline-minimax-equality}.
The two
infima, over $x$ and over the residual completions, commute. For any fixed
$x$, the residual sets are compact. Each cut independently attains its
smallest inner product, and all these minimizers can be chosen at once.
Consequently,
\begin{align}
 \inf_{r_i\in K(c^{(i)})}\max_i\{b_i+\ip{Qv^{(i)}+r_i}{x}\}
 &=\max_i\left\{b_i+\ip{Qv^{(i)}}{x}+\min_{r_i\in K(c^{(i)})}\ip{r_i}{x}\right\}\nonumber\\
 &=\max_i\left\{b_i+\ip{Qv^{(i)}}{x}-\sum_{j=0}^k c^{(i)}_j\norm{\Pi_{S_j^\perp}x}\right\}.
 \label{eq:independent-completion}
\end{align}
The last equality uses the minimum inner product over each orthogonal ball:
it is the negative of the radius times the corresponding projected norm.
Consequently,
\begin{equation}
 V=\inf_{x\in\R^d}\left\{\frac\lambda2\norm{x}^2+
 \max_i\left[b_i+\ip{Qv^{(i)}}{x}
 -\sum_{j=0}^k c^{(i)}_j\norm{\Pi_{S_j^\perp}x}\right]\right\}.
 \label{eq:completion-envelope}
\end{equation}
The resulting objective is not
yet convex in $x$, because the negative tail norms occur inside the maximum.

\paragraph{Step 2: prove that the tail lift is exact.}
Introduce the convex problem
\begin{equation}
\begin{aligned}
 \min_{z,t,s}\quad &\frac\lambda2 t_0^2+s\\
 \text{s.t.}\quad &b_i+(v^{(i)})^\top z-\sum_{j=0}^k c^{(i)}_jt_j\le s
                  \quad(1\le i\le N),\\
 &\norm{(z_j,t_j)}\le t_{j-1}\quad(1\le j\le k),\\
 &t_k\ge0,
\end{aligned}
 \label{eq:tail-lift}
\end{equation}
where $z\in\R^k$, $t\in\R^{k+1}$, and $s\in\R$.
Let $P$ be its optimum. We prove both inequalities
between $P$ and $V$. Any $x\in\R^d$ induces the feasible choice
\[
 z=Q^\top x,\qquad t_j=\norm{\Pi_{S_j^\perp}x},\qquad
 s=\max_i\left\{b_i+(v^{(i)})^\top z-\sum_jc^{(i)}_jt_j\right\}.
\]
Here $t_0=\norm{x}$ because $S_0=\{0\}$. The tail constraints hold with
equality by orthogonality, and the lifted objective equals the value in
\eqref{eq:completion-envelope}. Thus $P\le V$.

Conversely, take any feasible $(z,t,s)$ in the lift. The cone inequalities imply that every $t_j\ge0$ and, by repeated squaring,
\begin{equation}
 t_j^2\le t_0^2-\sum_{\ell=1}^j z_\ell^2
 \quad(0\le j\le k).
 \label{eq:tail-inequality}
\end{equation}
The lift may underestimate some tail lengths. The unused ambient direction
lets us fill the remaining length without changing the retained coordinates.
Since $k<d$, choose $w\in S_k^\perp$ with
$\norm{w}^2=t_0^2-\norm{z}^2$ and set $x=Qz+w$. The squared length is
nonnegative by \eqref{eq:tail-inequality}. This gives
\begin{equation}
 \norm{x}=t_0,\qquad
 \norm{\Pi_{S_j^\perp}x}
   =\sqrt{t_0^2-\sum_{\ell=1}^jz_\ell^2}\ge t_j.
 \label{eq:tail-completion}
\end{equation}
The coefficients $c^{(i)}_j$ are nonnegative, and the tail terms enter each
cut with a minus sign. Replacing $t_j$ by these larger actual tails cannot
increase any cut expression. The value of \eqref{eq:completion-envelope}
at $x$ is therefore at most $\lambda t_0^2/2+s$. Taking infima yields
$V\le P$, so the lift is exact. For $k=0$, the same argument simply chooses
a vector of norm $t_0$ in $\R^d$.

\paragraph{Step 3: evaluate the convex dual.}
The lift is convex and satisfies Slater's condition. A strict feasible point
is obtained by choosing $z=0$, strictly decreasing positive
$t_0,\ldots,t_k$, and sufficiently large $s$. The objective is also bounded
below. To see this, let $V_{\max}=\max_i\norm{v^{(i)}}$ bound the retained
slope norms and $C_{\max}=\max_i\sum_jc^{(i)}_j$ bound the total profile
radii. The inequalities $\norm{z}\le t_0$ and $t_j\le t_0$ imply
\[
 \frac\lambda2t_0^2+s\ge\frac\lambda2t_0^2+
 \min_i b_i-(V_{\max}+C_{\max})t_0.
\]
The positive quadratic term makes this lower bound finite. Slater's
condition therefore gives strong Lagrangian duality. Assign nonnegative
multipliers $\alpha_i$ to the affine cut inequalities. The coefficient of
the free epigraph variable $s$ in the Lagrangian is
$1-\sum_i\alpha_i$, so minimizing over $s$ enforces
$\sum_i\alpha_i=1$. For the resulting aggregate $(\beta,v,c)$, the dual
function is
\begin{equation}
 \beta+\inf_{(z,t)\in\mathcal T}
       \left\{\frac\lambda2t_0^2+v^\top z-\sum_jc_jt_j\right\},
 \label{eq:lift-dual}
\end{equation}
where $\mathcal T$ is the set of $(z,t)$ satisfying the tail cone
inequalities in \eqref{eq:tail-lift}, including $t_k\ge0$.

It remains to evaluate the inner infimum. Fix the full length $t_0=a\ge0$.
The smallest remaining linear term is $-a\Mnorm(v,c)$. Indeed,
\eqref{eq:tail-completion} replaces a feasible tail vector by actual tails
without increasing this term, while every ambient vector supplies feasible
tail variables. Thus we may minimize over ambient vectors of norm $a$.
Replacing $x$ by $-x$ reverses the retained linear term without changing any
tail norm; sign symmetry and \eqref{eq:norm-support} then give
\[
 \min_{\norm{x}=a}\left\{\ip{Qv}{x}-\sum_jc_j\norm{\Pi_{S_j^\perp}x}\right\}
 =-a\Mnorm(v,c).
\]
The support maximum in \eqref{eq:norm-support} can be taken on the unit
sphere: its objective is positively homogeneous and has nonnegative
maximum. Scaling by $a$ gives the displayed identity, including $a=0$.
The remaining minimization in \eqref{eq:lift-dual} is therefore the scalar
quadratic problem
\[
 \inf_{a\ge0}\left\{\frac\lambda2a^2-a\Mnorm(v,c)\right\}
 =-\frac{\Mnorm(v,c)^2}{2\lambda}.
\]
The dual optimum is exactly $\max_{\alpha\in\Delta_N}L_Q(\alpha)$, proving the equality.

\paragraph{Attainment and a worst compatible objective.}
The equality also has a compatible objective that attains it. First, the
lift attains its optimum: the quadratic lower bound above bounds $t_0$ on
every nonempty objective sublevel set. All other tails and $z$ are then
bounded, and the cut inequalities and objective bound together bound $s$.
The feasible set is closed, so these sublevel sets are compact.

Complete an optimal lift to $x$ using \eqref{eq:tail-completion}. For each
record choose residual components opposite to the tails of this $x$:
\begin{equation}
 r_i=-\sum_{j=0}^k c^{(i)}_j
       \frac{\Pi_{S_j^\perp}x}{\norm{\Pi_{S_j^\perp}x}},
 \label{eq:adversarial-residual}
\end{equation}
Set a term with zero denominator to zero. Each summand belongs to its
designated residual ball, so $r_i\in K(c^{(i)})$, and its inner product
with $x$ attains the minimum in \eqref{eq:independent-completion}. The
completed max-of-affines objective has value at $x$ at most $V$. By
definition of $V$, its minimum is at least $V$, so equality follows. Any
other compatible convex $h$ dominates some completed envelope, and hence
cannot have a smaller optimum than $V$. This proves the information claim
as well as the minimax equality.

\subsection{Counterexamples and exact residual losses}
\label{app:counterexamples}
\paragraph{Full-rank information can be stronger than an aggregate certificate.}
Take $d=k=1$ and $\lambda=1$, with an uncertain cut $1+rx$, $|r|\le1$,
and two exact cuts $x$ and $-x$. The uncertain slope was discarded at
dimension zero. Let $a\in[0,1]$ be the weight on this cut.
The remaining weights sum to $1-a$, and their signed
aggregate slope is the scalar $v$. The aggregate intercept is $a$, and at
full rank its profile is $c_0=a$, $c_1=0$. The recursion gives
$\Mnorm=|v|+a$, so
\[
 L_Q=a-\tfrac12(|v|+a)^2\le a-\tfrac12a^2\le\tfrac12.
\]
The value $1/2$ is attained at $a=1$. To compute the smallest compatible
optimum directly, put $t=|x|$. At each fixed $x$, the uncertain residual can
be chosen so that $rx=-|x|$, while the maximum of the two exact cuts is
$|x|$. Independently completing the uncertain cut and minimizing over $x$
therefore gives
\[
 \inf_{|r|\le1}\inf_x\left\{\tfrac12x^2+\max(1+rx,x,-x)\right\}
 =\min_{t\ge0}\left\{\tfrac12t^2+\max(1-t,t)\right\}.
\]
For $t\le1/2$, the expression is $t^2/2+1-t$, decreasing to $5/8$. For
$t\ge1/2$, it is $t^2/2+t$, increasing from $5/8$. This proves the strict
gap between the two bounds. In the lift proof, the missing direction was
needed to increase actual tail lengths at a fixed full norm; this example
shows that the step cannot be omitted at full rank.

\paragraph{An old residual overlaps a later discovered direction.}
In Example~\ref{ex:history}, write $y=x_1$. The objective is minimized
at $x_2=0$, and the two affine functions intersect at $y_*=-(1+\eta)/2$.
The derivative of the quadratic-plus-first-cut branch
there is $(\eta-1)/2<0$, and that of the second branch is
$(1-\eta)/2>0$, so $y_*$ is the minimizer. Evaluation yields
$F_*=(\eta^2-2\eta-3)/8$. The first cut is active at $y=-1$ and the second
at $y=0$, so both can be genuine oracle minorants.

Their equally weighted intercept is $(\eta^2-1)/4$. The correct norm
$(1+\eta)/2$ gives exactly $F_*$. Treating the old residual as orthogonal
to the retained slope instead uses the fictitious norm
$\sqrt{1+\eta^2}/2$ and gives $(\eta^2-3)/8$, proving the error $\eta/4$.

\subsection{Further consequences of the stored geometry}
\label{app:consequences}
\paragraph{Deleting cuts does not erase an already proved bound.}
Each evaluated scalar $L_Q$ remains a lower bound for the same target
objective after its source cuts are deleted. Keeping their maximum therefore
gives monotone $\Lbest$. Replacing a selected aggregate by one record also
preserves its residual profile and all subsequent nonnegative uses of that
record. It does not preserve the maximum envelope of the original cuts.

For a concrete example, set $\lambda=1$ and begin with the exact cuts $x$
and $-x-4$. Their regularized model minimum is $-1/2$, supported by $x$
alone, so keeping only that cut preserves the current certificate. Now add
$3x+7$. Keeping only $x$ from the old model gives minimum $5/2$ at $x=-3$.
Keeping both old cuts gives minimum $81/32$ at their relevant intersection
$x=-11/4$. Thus a cut with zero weight in the present certificate can
improve a future certificate.

\paragraph{Basis truncation.}
The append-only formulas do not directly apply when basis columns are
removed. To truncate a raw record born at $S_m$ to a prefix
$S_\ell\subseteq S_m$, project its retained slope to
$p'=\Pi_{S_\ell}p$. The new omitted vector is the dropped known component
$p-p'$ plus the old residual. A valid new radius is
\[
 \eta'=\sqrt{\norm{p-p'}^2+\eta^2}.
\]
The dropped known vector lies in $S_m$, the old residual is orthogonal to
$S_m$, and their sum is orthogonal to $S_\ell$; Pythagoras proves the
radius. The updated record is labeled with birth dimension $\ell$.
Records born before $S_\ell$ keep their earlier birth labels.

For an arbitrary change of basis, a safe update can instead use a translated
uncertainty set. If $p'$ is the new retained slope and $K$ the old residual
set, take $K'=(p-p')+K$, since
$p+r=p'+[(p-p')+r]$. Merely reusing the old radius or relabeling its
orthogonality is unjustified.

\paragraph{Distance to the target minimizer.}
For the quadratic target, an objective-gap certificate also controls
distance to its minimizer. If a minimizer $x_*$ exists, convexity of $h$ and
the quadratic term imply
$F(x)-F_*\ge\lambda\norm{x-x_*}^2/2$. Therefore any reported gap
$\Delta=U-\Lbest$ gives
$\norm{\widehat x-x_*}\le\sqrt{2\Delta/\lambda}$. This statement concerns
the minimizer of the regularized objective; it does not certify a separate
discrete assignment or primal feasibility for another problem.

\section{General domains, oracle errors, and protocol certificates}
\label{app:extensions}
Theorem~\ref{thm:baseline-robust} has two ingredients: sum valid minorants with
nonnegative coefficients, and bound the conjugate penalty for their
aggregate uncertainty. This appendix uses the same argument with other
domains and regularizers, more general oracle records, and outputs formed
without function values. These are certificate constructions; they do not
give convergence rates for arbitrary proposal methods.
For a fixed weight vector, we use the abbreviations
$\beta=b^\alpha$, $p=p^\alpha$, and $K_\alpha=K^\alpha$.

\subsection{Balls and other known regularizers}
\label{app:ball-regularizer}
\paragraph{Conjugate penalties on a ball.}
First take the known domain to be the radius-$R$ Euclidean ball. For
$\Psi=I_{R\Ball^d}$, maximizing a linear function over this ball gives
the conjugate $R\norm{y}$. Thus the exact bound for fixed aggregate
$(\beta,v,c)$ is $L_R=\beta-R\Mnorm(v,c)$. If the target also has a
quadratic regularizer with $\lambda>0$, the conjugate changes to
\begin{equation}
 \Psi(x)=\frac\lambda2\norm{x}^2+I_{R\Ball^d}(x),\qquad
 \Psi^*(y)=q_R(\norm{y}),\qquad
 q_R(s)=\begin{cases}s^2/(2\lambda),&s\le\lambda R,\\Rs-\lambda R^2/2,&s>\lambda R.
 \end{cases}
 \label{eq:regularized-ball-conjugate}
\end{equation}
Here $s=\norm{y}\ge0$, and $q_R$ is the scalar conjugate penalty as a
function of slope norm. To derive it, align $x$ with $y$ and write
$t=\norm{x}$. Conjugacy reduces to maximizing $st-\lambda t^2/2$ over
$0\le t\le R$, whose maximizing radius is $\min\{s/\lambda,R\}$.
Since $q_R$ is nondecreasing,
$\sup_{r\in K(c)}\Psi^*(-Qv-r)=q_R(\Mnorm(v,c))$.

The record-only minimax equality extends to both objectives on the ball
when $k<d$ and $R>0$. Add $t_0\le R$ to the tail lift, because $t_0$
represents the ambient norm. Its exactness proof is unchanged, and a strict
feasible point has $0<t_k<\cdots<t_0<R$. The dual inner minimization is
now restricted to radii in the ball:
$\inf_{0\le a\le R}\{\lambda a^2/2-a\Mnorm\}=-q_R(\Mnorm)$.
For the unregularized case, set $\lambda=0$ in this scalar minimization
to obtain $-R\Mnorm$. This proves both extensions.

\paragraph{Residual loss and artificial curvature.}
For common-space records, write $\rho=\sum_i\alpha_i\eta_i$ for the
weighted residual radius. The unregularized-ball correction, namely the
decrease from the retained-slope bound to the ambient bound, is
\[
 R\left(\sqrt{\norm{v}^2+\rho^2}-\norm{v}\right)\le R\rho.
\]
At $v=0$, an omitted affine slope of norm $\rho$ attains the loss $R\rho$,
so its worst-case linear dependence is unavoidable.

Adding artificial curvature does not remove this issue for an originally
unregularized objective $f$. Denote its optimum by $f_*$ and the optimum
after adding a quadratic by $F_{\lambda,*}$. If some original minimizer has
norm at most $R$, then
\[
 f(x)-f_*\le F_\lambda(x)-F_{\lambda,*}+\lambda R^2/2,
 \qquad F_\lambda=f+\lambda\norm{x}^2/2.
\]
This follows from $F_{\lambda,*}\le f_*+\lambda R^2/2$ and
$f(x)\le F_\lambda(x)$. The extra term $\lambda R^2/2$ accounts for the
change of target caused by regularization. Balancing this bias against the
common-space residual cost gives
\[
 \inf_{\lambda>0}\left\{\lambda R^2/2+\rho^2/(2\lambda)\right\}=R\rho.
\]
Thus the quadratic residual allowance is most useful when curvature already belongs to the target.

\paragraph{Other radial regularizers.}
More generally, let $\Psi(x)=\phi(\norm{x})$, where $\phi$ is
nondecreasing and convex on nonnegative radii. Define the scalar conjugate
over radii by $\phi_+^*(s)=\sup_{t\ge0}\{st-\phi(t)\}$. The same radial
alignment argument gives the conjugate $\phi_+^*(\norm{y})$, and
monotonicity gives the fixed-aggregate bound
$\beta-\phi_+^*(\Mnorm)$. Useful finiteness of this penalty remains an
explicit condition.

\subsection{General strong convexity and a fixed objective metric}
Radial symmetry makes the historical penalty exact through a single norm.
For a general strongly convex regularizer, we can instead bound the change
in the conjugate around the retained slope. Suppose $\Psi$ is
$\mu$-strongly convex in a norm $\lVert\cdot\rVert$. Let
$\lVert\cdot\rVert_*$ denote its dual norm, and suppose
$\dnorm{r}\le\rho$ for all $r\in K_\alpha$. Define
$x_p=\nabla\Psi^*(-p)$, the minimizer for the retained affine slope, and
$L_0=\beta-\Psi^*(-p)$, the corresponding uncorrected value.
Strong convexity and $-p\in\partial\Psi(x_p)$ imply
\[
 \Psi(x_p+w)\ge\Psi(x_p)-\ip{p}{w}+\frac\mu2\lVert w\rVert^2.
\]
Subtract this inequality from $\ip{-p-r}{x_p+w}$ and take the supremum
over $w$. H\"older's inequality bounds the remaining term by
$\sup_{t\ge0}\{\dnorm{r}t-\mu t^2/2\}=\dnorm{r}^2/(2\mu)$.
To express the resulting linear penalty, define the support function of a
set $K$ by $\sigma_K(y)=\sup_{r\in K}\ip{y}{r}$. Consequently,
\[
 \Psi^*(-p-r)\le\Psi^*(-p)-\ip{x_p}{r}+\dnorm{r}^2/(2\mu),
 \qquad
 L(\alpha)\ge L_0-\sigma_{K_\alpha}(-x_p)-\rho^2/(2\mu).
\]
The support-function term accounts for first-order interaction between the
omitted slope and $x_p$; the last term is quadratic in the residual radius.
For nested Euclidean balls the linear term is
$\sum_jc_j\norm{\Pi_{S_j^\perp}x_p}$. It vanishes only with the
appropriate orthogonality. For example,
$\Psi(x)=\mu\norm{x}^2/2+\ip{q}{x}$, with a fixed vector $q$, has
conjugate $\norm{y-q}^2/(2\mu)$. An omitted slope aligned with $q$ changes
the conjugate to first order even when the retained slope is zero.

If the target has a fixed positive-definite quadratic metric $H\succ0$,
an exact Euclidean treatment is available by changing coordinates. Set
$y=H^{1/2}x$; then $x^\top Hx/2=\norm{y}^2/2$ and a slope $a_i$ becomes
$H^{-1/2}a_i$. Compress these transformed slopes in nested Euclidean spaces
and use $\beta-\Mnorm^2/2$. The metric must belong to the target
objective. An algorithmic quasi-Newton or proximal matrix does not
automatically supply such curvature.

\subsection{Joint errors, separators, and component information}
\paragraph{Correlated intercept and slope errors.}
If an intercept cannot be computed before discarding slope coordinates, a
residual norm alone may be insufficient. Retain a joint set $\mathcal U_i$
containing the actual pair $(s_i,r_i)$, where $s_i$ is the intercept error
and $r_i$ the omitted slope. The asserted minorant is then
$b_i+s_i+\ip{p_i+r_i}{x}$. Nonnegative summation gives the aggregate pair
set $\mathcal U_\alpha=\sum_i\alpha_i\mathcal U_i$, and the same
conjugacy proof gives
\begin{equation}
 L=\beta-\sup_{(s,r)\in\mathcal U_\alpha}\{\Psi^*(-p-r)-s\}.
 \label{eq:joint-error-bound}
\end{equation}
This preserves any known correlation between the intercept error and slope error.

\paragraph{Feasibility separators.}
A feasibility separator is an affine function known to be nonpositive on
$X$. If $b_j+\ip{p_j+r_j}{x}\le0$ on $X$, it can be added to an objective
minorant with any multiplier $\gamma_j\ge0$, since the added term is
nonpositive at every feasible point. Objective-cut weights still sum to
one, but separator weights need not. Both the separator intercepts and
their residual sets are scaled by $\gamma_j$.

\paragraph{Partial objective information.}
A \emph{mass record} bounds a scalar multiple of the loss:
$\tau_i h(x)\ge b_i+\ip{p_i+r_i}{x}$. To obtain a minorant of $h$, use
nonnegative coefficients satisfying $\sum_i\alpha_i\tau_i=1$.
For a component objective $h=\sum_{\ell=1}^J w_\ell h_\ell$, let
$w=(w_1,\ldots,w_J)$ be the vector of component weights. A vector-mass
record has mass vector $\tau_i=(\tau_{i1},\ldots,\tau_{iJ})$ and asserts
$\sum_\ell\tau_{i\ell}h_\ell(x)\ge b_i+\ip{p_i+r_i}{x}$.
Choose nonnegative coefficients satisfying $\sum_i\alpha_i\tau_i=w$.
In each case, summing the valid inequalities recovers a minorant for the
target loss, to which the same conjugate bound applies. These formulations
allow componentwise or distributed minorants without a full objective
gradient at every call.

\paragraph{Probabilistic records.}
Let $\delta_{\mathrm{prob}}$ be a failure-probability allowance. If all
minorant and residual-containment assertions hold simultaneously on an
event of probability at least $1-\delta_{\mathrm{prob}}$, the pathwise
proof holds at every adaptive stopping time on that event. Unbiasedness of
a stochastic gradient does not establish this simultaneous containment
condition.

\subsection{Inexact full-objective subgradients and usable curvature}
\label{app:inexact-curvature}
For an exact full-objective subgradient of a quadratic target, subtracting
the regularizer gradient gives a loss subgradient. For an inexact
full-objective subgradient, the same subtraction requires an adjusted error
allowance and may require less curvature. Let $F$ be $\lambda$-strongly
convex and let $g\in\partial_\delta F(x)$ in the ordinary global sense.
Choose $0<\theta<1$ as the fraction of curvature to retain. Then
\begin{equation}
 F(y)\ge F(x)+\ip{g}{y-x}+\frac{\theta\lambda}{2}\norm{y-x}^2
                              -\frac\delta{1-\theta}.
 \label{eq:inexact-curvature}
\end{equation}
To prove this inequality, set $t=1-\theta$ and take the intermediate point
$z=(1-t)x+ty$. Applying the inexact subgradient inequality at $x$ to this
$z$, and bounding $F(z)$ by strong-convexity interpolation, gives,
respectively,
\[
 F(z)\ge F(x)+t\ip{g}{y-x}-\delta,
 \qquad
 F(z)\le(1-t)F(x)+tF(y)-\frac\lambda2t(1-t)\norm{y-x}^2.
\]
Combining these bounds and dividing by $t$ proves
\eqref{eq:inexact-curvature}. Subtracting
$\theta\lambda\norm{y}^2/2$ from both sides shows that
$g-\theta\lambda x$ is a $\delta/(1-\theta)$-subgradient of
$F-\theta\lambda\norm{x}^2/2$. This supplies a valid minorant for a
decomposition using the known curvature $\theta\lambda$.

The choice of $\theta$ trades retained curvature against the oracle-error
allowance. For example, half the curvature doubles the error allowance.
If $\delta=0$, taking $\theta\uparrow1$ recovers exact full-curvature
subtraction. For an arbitrary inexact full-objective subgradient, simply
subtracting $\lambda x$ need not be valid.

\subsection{Value-free averaged outputs and variational inequalities}
\label{app:value-free}
Return to the general target \eqref{eq:baseline-problem}, with
$\Psi=\psi_0+I_B$. Function values can be eliminated if we certify an
average of the query points rather than an incumbent selected by objective
value. Suppose $a_i\in\partial_{\delta_i}h(x_i)$ at feasible points, and
store the exact scalar $s_i=\ip{a_i}{x_i}$ before compression. Write the
compressed slopes as $a_i=p_i+r_i$, and form $p$ and $K_\alpha$ as in
Theorem~\ref{thm:baseline-robust}.

For $\alpha\in\Delta_N$ and $\bar x=\sum_i\alpha_i x_i$, Jensen's
inequality gives $h(\bar x)\le\sum_i\alpha_i h(x_i)$. Summing the
inexact minorants gives, for every $z\in X$,
\[
 F(z)\ge\sum_i\alpha_i h(x_i)-\sum_i\alpha_i(s_i+\delta_i)
          +\ip{p+r_{\mathrm{true}}}{z}+\psi_0(z).
\]
Here $r_{\mathrm{true}}=\sum_i\alpha_i r_i$. Minimize over $z\in X$,
enlarge to $B$, and use the conjugate to obtain a lower bound. Subtracting
it from the Jensen upper bound for $F(\bar x)$ cancels the unknown
function values, yielding
\begin{equation}
 F(\bar x)-F_*\le\psi_0(\bar x)+\sum_i\alpha_i(s_i+\delta_i)
                       +\sup_{r\in K_\alpha}\Psi^*(-p-r).
 \label{eq:value-free}
\end{equation}
On the unregularized ball this becomes
$\sum_i\alpha_i(s_i+\delta_i)+R\Mnorm$. For the unconstrained quadratic,
it becomes
$\lambda\norm{\bar x}^2/2+\sum_i\alpha_i(s_i+\delta_i)+\Mnorm^2/(2\lambda)$.
Feasibility separators with multiplier $\gamma_j$ add $-\gamma_jb_j$ to
the scalar term and add their slopes and residuals to the aggregate. Their
nonpositivity on $X$ proves the same bound.

Selecting weights after the fact requires enough information about the
query points to construct $\bar x$. When the queries lie in the discovered
span, stored coordinates suffice. Arbitrary ambient proposal methods must
keep the points or a suitable primal aggregate. The incumbent certificate
$U-L$ avoids this requirement and needs only one best feasible output.

\paragraph{Monotone-operator certificates.}
The same summation argument also bounds a variational-inequality residual.
Let $V$ be a monotone operator on $X$, meaning
$\ip{V(x)-V(y)}{x-y}\ge0$ for all $x,y\in X$. Store
$s_i=\ip{V(x_i)}{x_i}$ and a valid compressed representation of $V(x_i)$.
Monotonicity implies
$\ip{V(y)}{x_i-y}\le\ip{V(x_i)}{x_i-y}$. Summing with simplex weights
and maximizing over $y\in X$ gives the Minty residual bound
\begin{equation}
 \sup_{y\in X}\ip{V(y)}{\bar x-y}
 \le\sum_i\alpha_i s_i+
                  \sup_{r\in K_\alpha}\sigma_X(-p-r).
 \label{eq:vi-bound}
\end{equation}
The left side measures the Minty residual at the averaged point $\bar x$;
on the right, $\sigma_X$ is the support function of the feasible domain.
The same reasoning applies to the usual ergodic residual for a
convex--concave saddle operator. These are support-function certificates;
the objective's quadratic correction does not automatically extend to
arbitrary monotone games.

\paragraph{Numerical orthogonality.}
All historical formulas assume that the actual omitted vector belongs to
the stated orthogonal ball. In finite precision, an approximately
orthonormal $Q$ and a measured residual norm do not alone prove this exact
membership. One can certify orthogonality and outward norm bounds, or
enclose numerical leakage parallel to the stored space in an additional
full-ball component at birth dimension zero. That component does not assert
orthogonality to any retained direction. If only a full norm bound is
trustworthy, use that uncertainty set in Theorem~\ref{thm:baseline-robust}.
This preserves validity at the price of a weaker certificate.

\section{The bounded-ball cutting-plane construction}
\label{app:cutting-plane}
\subsection{Validity of the history-aware, value-free certificate}
\label{app:cp-validity}
We first prove that each choice of weights certifies its own averaged
output, independently of the volume argument. Let the retained productive
points satisfy \eqref{eq:ball-oracle}, and choose simplex weights $\alpha$
on these records. Their omitted slopes are $r_i=g_i-p_i$, orthogonal to
the space containing $x_i$. This remains true if the basis is enlarged
before the record is stored: $x_i$ still belongs to that enlarged space.
Thus $\ip{r_i}{x_i}=0$.

For $\bar x^\alpha=\sum_i\alpha_i x_i$ and every $z\in X$, convexity and the
oracle inequalities give
\begin{align}
 F(\bar x^\alpha)-F(z)
 &\le\sum_i\alpha_i\{F(x_i)-F(z)\}\\
 &\le\sum_i\alpha_i\{\ip{g_i}{x_i-z}+\delta_i\}\\
 &=\sum_i\alpha_i(\ip{p_i}{x_i}+\delta_i)
                      -\ip{\sum_i\alpha_i(p_i+r_i)}{z}.
\end{align}
The first inequality is Jensen's inequality. In the last equality, the
residuals disappear from the query-point scalar products because
$\ip{r_i}{x_i}=0$. Maximizing the remaining linear term over the ball
yields $R\norm{\sum_i\alpha_i(p_i+r_i)}$. The actual aggregate residual
belongs to $K(c^\alpha)$, so Lemma~\ref{lem:baseline-norm} proves
Lemma~\ref{lem:ball-certificate}. For the volume argument, we also use the
simpler triangle certificate
\begin{equation}
 C_{\mathrm{tri}}(\alpha):=
 \sum_i\alpha_i(\ip{p_i}{x_i}+\delta_i+R\eta_i)
       +R\norm{\sum_i\alpha_i p_i}.
 \label{eq:triangle-certificate}
\end{equation}
Indeed, the triangle inequality gives
\[
 \Mnorm(v^\alpha,c^\alpha)
 \le\norm{\sum_i\alpha_i p_i}+\sum_i\alpha_i\eta_i,
\]
so $C(\alpha)\le C_{\mathrm{tri}}(\alpha)$. No optimality
condition on the weights is needed for either certificate. If $p_i=0$, a
single record gives $F(x_i)-F_*\le\delta_i+R\eta_i$ and the algorithm can
stop immediately.

\subsection{The volume primitive and reserved-coordinate symmetry}
\label{app:primitive}
Assumption~\ref{ass:primitive} isolates the fixed-dimensional geometric
result used in Theorem~\ref{thm:cp}. A volumetric localizer starts from a
box, accepts a nonzero central halfspace at its query center, retains a
permitted outward relaxation, and may delete constraints with small
leverage scores. It retains $O(q)$ constraints and, after
$O(q\log(2+qR/r))$ updates, its volume is below that of a radius-$r$ ball.
The fixed-dimensional reduction and deletion analysis are prior ingredients
\citep{anstreicher1997}; Appendix A of \citet{jiang2020} describes the
volumetric barrier, leverage deletion, outward cut insertion, and $O(q)$
constraint count. This interface is specific to a suitable volumetric
primitive; arbitrary analytic-center methods or deletion policies need not
provide it.

We verify why coordinates can remain unassigned until a direction is
discovered. Within a phase, let $m$ be the current number of assigned
coordinates and $q$ the fixed capacity. Every newly supplied cut normal
has zero entries in the unassigned coordinates. The polytope therefore has
product form
\begin{equation}
 \mathcal P=\mathcal P_{\mathrm{active}}\times[-R,R]^{q-m}.
 \label{eq:product-polytope}
\end{equation}
Here $\mathcal P_{\mathrm{active}}$ is the polytope in the assigned
coordinates. To describe its volumetric center and deletion rule, let $a_i$
be the polytope's facet normals and $s_i(y)>0$ the slacks of their defining
inequalities at an interior point $y$. Define
\[
 H(y)=\sum_i\frac{a_ia_i^\top}{s_i(y)^2},\qquad
 V_{\mathrm{vol}}(y)=\frac12\log\det H(y),\qquad
 \ell_i(y)=\frac{a_i^\top H(y)^{-1}a_i}{s_i(y)^2}.
\]
The matrix $H$ is the Hessian of the logarithmic barrier used to define the
volumetric barrier $V_{\mathrm{vol}}$. The leverage score $\ell_i$
measures a facet's contribution to $H$ and is used in the deletion rule.
For \eqref{eq:product-polytope}, $H$ is block diagonal. An unassigned
coordinate $z_j$ contributes the scalar block
\[
 H_j(z_j)=\frac1{(R-z_j)^2}+\frac1{(R+z_j)^2}.
\]
This even function is minimized uniquely at zero: its derivative is
positive for $z_j>0$ and negative for $z_j<0$. The determinant of a block
diagonal matrix is the product of its block determinants, so the barrier
splits into an active term and $\frac12\sum_j\log H_j(z_j)$. Every
unassigned coordinate of its center is therefore zero.

Newton centering initialized with these coordinates at zero preserves them:
the gradient vanishes there and the centering system is block diagonal.
Approximate centering can likewise enforce these zeros exactly in working
coordinates.

At $z_j=0$, each of its two box facets has leverage
$R^{-2}/(2R^{-2})=1/2$. A standard deletion threshold strictly below $1/2$
therefore retains both facets, without an exceptional deletion rule. Outward
relaxation of a supplied cut with zero unused entries cannot break the
product structure. Assigning a discovered direction to an unused coordinate
changes no old cut coefficient and no current point, because that coordinate
was zero.

These observations establish the symmetry and facet-protection part of the
interface. The rate still relies on the chosen primitive's fixed-dimensional
volume theorem; it is not a new arithmetic or bit-complexity theorem.

\subsection{Details of Algorithm~\ref{alg:adaptive-cp}}
\label{app:cp-algorithm}
Algorithm~\ref{alg:adaptive-cp-detailed} expands the steps in the main
text. Let $P$ index the retained objective records and $\Delta_P$ be
their simplex. The center $y\in\R^q$ has zero unassigned entries;
the ambient query is $x_i=Qy_{1:m}$. For a retained slope $p_i$,
$\widetilde p_i\in\R^q$ contains its basis coordinates, padded with
zeros in unassigned slots.

\begin{algorithm}[htbp]
\caption{Detailed dimension-adaptive cutting-plane method}
\label{alg:adaptive-cp-detailed}
\small
\begin{algorithmic}[1]
\Require $R,\delta,G$, discovery threshold $\tau$, tolerance $\zeta$
\State $Q\gets[\,]$; $m\gets0$; $q\gets1$; $P\gets\varnothing$; $i\gets0$
\State $\mathrm{CP.Init}([-R,R]^q)$
\Loop
  \State $y\gets\mathrm{CP.Center}()$; prune $P$ to retained cuts
  \If{$P\ne\varnothing$}
    \State $\alpha\gets\argmin_{\alpha\in\Delta_P}C(\alpha)$
    \If{$C(\alpha)\le\delta+R\tau+\zeta$}
      \State \Return $(\bar x^\alpha,C(\alpha))$
    \EndIf
  \EndIf
  \If{$\norm{y}\ge R$}
    \State $\mathrm{CP.Cut}(y,y)$; \textbf{continue}
  \EndIf
  \State $i\gets i+1$; $x_i\gets Qy_{1:m}$
  \State Query $(g_i,\delta_i)$; $r_i\gets g_i-QQ^\top g_i$
  \If{$\norm{r_i}>\tau$}
    \State $Q\gets[Q\ \ r_i/\norm{r_i}]$; $m\gets m+1$
    \State $p_i\gets g_i$; $\eta_i\gets0$
  \Else
    \State $p_i\gets QQ^\top g_i$; $\eta_i\gets\norm{r_i}$
  \EndIf
  \If{$m>q$}
    \State $q\gets\min\{2q,d\}$; $P\gets\varnothing$
    \State $\mathrm{CP.Init}([-R,R]^q)$; \textbf{continue}
  \EndIf
  \If{$p_i=0$}
    \State \Return $(x_i,\delta_i+R\eta_i)$
  \EndIf
  \State $m_i\gets m$; $v_i\gets Q^\top p_i$; $y_i\gets y$
  \State Store $(y_i,v_i,\eta_i,m_i,\delta_i)$ in $P$
  \State $\mathrm{CP.Cut}(\widetilde p_i,y)$, linked to record $i$
\EndLoop
\end{algorithmic}
\end{algorithm}

The basis remains append-only across phases. Within a phase, old slope
coordinates are padded with zeros as the basis grows, while recording
dimensions remain fixed. The primitive may replace a supplied central
cut by an outward relaxation. A newly assigned coordinate was zero at
the query, so the projected cut remains central.

The ball-separation cut contains the entire working ball: for
$\norm{y}\ge R$ and $\norm{z}\le R$,
$\ip{y}{z}\le R\norm{y}\le\norm{y}^2$.
It requires no objective call. An overflow query is not used as a cut
after restarting, since it was not taken at the new center.

Objective records are removed only when their associated constraints
are deleted. Weight selection is performed after this synchronization
and before the next objective call. One may also test the certificate
immediately after cut insertion; subsequent deletion cannot invalidate
a previously evaluated certificate. All stored query and slope
coordinates are in the phase's working space; reserving coordinates
requires no ambient directions to be chosen in advance.

\subsection{Proof of the oracle and storage bounds}
\label{app:cp-proof}
The proof has two parts. Within a phase, failure of the stopping test
forces a small ball to remain inside the localizer. The volume primitive
eventually excludes such a ball. We then sum the costs over the doubling
phases.

\paragraph{A projected certificate within one phase.}
Let $P$ be the index set of retained productive records, and let
$\Delta_P$ be the simplex of weights indexed by $P$. For $i\in P$, write
$y_i\in\R^q$ for the padded working coordinates of $x_i$ and
$\widetilde p_i\in\R^q$ for those of $p_i$. Because the discovered basis
is orthonormal, these coordinates preserve scalar products and norms. For
nonempty $P$, define the best projected certificate by
\begin{equation}
 D_P=\min_{\alpha\in\Delta_P}
 \left\{\sum_{i\in P}\alpha_i\ip{\widetilde p_i}{y_i}
                         +R\norm{\sum_{i\in P}\alpha_i\widetilde p_i}\right\}.
 \label{eq:phase-projected-residual}
\end{equation}
This certificate uses only retained slopes; the oracle errors and residual
radii are accounted for next. If $D_P\le\zeta$, use its minimizing weights
in the triangle certificate. The individual inequalities
$\delta_i+R\eta_i\le\delta+R\tau$ then give
\[
 \min_\alpha C(\alpha)
 \le C_{\mathrm{tri}}(\alpha)\le D_P+\delta+R\tau
 \le\delta+R\tau+\zeta.
\]
Thus a phase which has not stopped must have $D_P>\zeta$, unless $P$ is
empty. This is the only implication about weight selection needed for the volume
argument.

\paragraph{Failure to stop leaves a ball in the localizer.}
For nonempty $P$, bilinear minimax on the compact simplex and working ball
gives
\begin{equation}
 D_P=\max_{\norm{z}\le R}\min_{i\in P}\ip{\widetilde p_i}{y_i-z}.
 \label{eq:phase-minimax}
\end{equation}
If $D_P>\zeta$, there exists $z\in R\Ball^q$ with
$\ip{\widetilde p_i}{y_i-z}>\zeta$ for every retained productive cut.
This common margin allows a neighborhood of $z$ to satisfy all the cuts.
To keep the neighborhood within the working ball, let $r=\zeta/(4G)$
and move its center slightly toward zero: $w=(1-r/R)z$. Since
$0<\zeta\le RG$, we have $r\le R/4$, and the closed radius-$r$ ball
$B(w,r)$ lies in $R\Ball^q$. For every $u$ in this smaller ball,
$\norm{u-z}\le2r$. Since $\norm{\widetilde p_i}\le G$, it follows that
\[
 \ip{\widetilde p_i}{y_i-u}>\zeta-2Gr=\zeta/2>0.
\]
The small ball therefore obeys every productive central cut and every
outward relaxation of it. It also obeys each retained ball-separation cut
and box facet, because it lies in $R\Ball^q$. These are all the types of
constraints in the localizer, so the ball lies in the current polytope even
after old cuts have been deleted. If $P$ is empty, there are no productive
cuts to check and the entire working ball remains inside the localizer,
giving the same volume obstruction.

Assumption~\ref{ass:primitive} rules out this radius-$r$ ball after
$O(q\log(2+qRG/\zeta))$ updates. A phase must therefore stop or overflow
within that many updates. Ball-separation and cut-deletion updates require
no objective call, so counting all updates upper-bounds the productive
calls. Each overflow adds at most one extra objective query beyond that
phase's processed cuts: the query that discovered the overflowing direction.

\paragraph{Sum over phases and count storage.}
Let $m$ now denote the total number of directions discovered before
termination. If $m=0$, the first query has $p_i=0$, so the method stops
after one objective-oracle call and uses $O(d)$ storage. For $m\ge1$,
capacities double and their final value is at most $2m$. Their sum,
including a possible final value clipped at $d$, is at most $4m$. Hence
\[
 N_{\mathrm{oracle}}
 \le C\sum_{\mathrm{phases}}q\log(2+qRG/\zeta)+O(\log(m+1))
 =O\!\left(m\log(2+mRG/\zeta)\right).
\]
The basis needs $dm$ real numbers. Only the current phase's records need
to be retained. That phase has $O(q)$ constraints and productive records,
each with $O(q)$ slope, point, and profile coordinates. Dense centering and
weight-selection workspaces have dimension $O(q)$ and require $O(q^2)$ storage.
Including an ambient output, the resulting dense storage count is
$O(dm+m^2)$. This completes Theorem~\ref{thm:cp}.

\paragraph{Approximate weight-selection solves.}
Every feasible approximate weight vector safely certifies its own averaged
point. To preserve the termination rate, the solve also needs a controlled
suboptimality bound. For example, let $\kappa$ be an additive solve-error
allowance and require
$C(\alpha)\le\min C+\kappa$ with
$\kappa\le\zeta/2$. If $D_P\le\zeta/2$, the previous comparison ensures
that these approximate weights still pass the stopping threshold. If the
method has not stopped, the obstruction proof can therefore use margin
$\zeta/2$ and radius $\zeta/(8G)$, changing only constants in the
logarithm. Feasible weights alone establish validity but do not establish
this termination bound.

\subsection{Proof of the approximate-width discovery bound}
\label{app:width}
Each appended direction measures a component not in the span of earlier
discovery gradients. Let $a_1,\ldots,a_m$ be the gradients that trigger
discovery, listed in discovery order. The current basis spans their
predecessors, so
\[
 \dist(a_j,\spanop(a_1,\ldots,a_{j-1}))>\tau,
 \qquad \norm{a_j}\le G.
\]
Assume a fixed $k$-dimensional space $U$ satisfies
$\dist(a_j,U)\le\eta\le\tau/2$. If $\eta=0$, these gradients lie in
$U$ and are linearly independent, giving $m\le k$, also for exact
discovery with $\tau=0$. If $k=0$, no vector exceeds the threshold and
$m=0$. For the approximate bound take $\tau>0$, $k\ge1$, and $m>k$,
since $m\le k$ already implies the stated result.

The determinant argument compares two measures of the volume spanned by
the discovery gradients. Their sequential distances give a lower bound;
closeness to $U$ gives an upper bound.

For the matrix $A=[a_1\ \cdots\ a_m]$, Gram--Schmidt gives
\begin{equation}
 \det(A^\top A)=\prod_{j=1}^m
       \dist(a_j,\spanop(a_1,\ldots,a_{j-1}))^2>\tau^{2m}.
 \label{eq:gram-lower}
\end{equation}
Let $s_1\ge\cdots\ge s_m>0$ be the singular values of $A$. The slope
norm bound gives $\sum_js_j^2\le mG^2$. Meanwhile, $\Pi_UA$ has rank at
most $k$ and $\fnorm{A-\Pi_UA}^2\le m\eta^2$. The best rank-$k$
approximation has no larger Frobenius error, so
$\sum_{j>k}s_j^2\le m\eta^2$. The determinant is the product of all
$s_j^2$. Applying arithmetic--geometric mean separately to the first $k$
and remaining $m-k$ values therefore yields
\begin{equation}
 \tau^{2m}<\det(A^\top A)
 \le\left(\frac{mG^2}{k}\right)^k
       \left(\frac{m\eta^2}{m-k}\right)^{m-k}.
 \label{eq:gram-upper}
\end{equation}
To turn this comparison into a count, put $u=m/k>1$ and
$L=\log(G/\tau)\ge0$. Taking logarithms and using $\eta\le\tau/2$ gives
\[
 2(u-1)\log2<2L+\log u+(u-1)\log\frac{u}{u-1}
 \le2L+\log u+1.
\]
The last inequality follows from $\log(1+t)\le t$. Also $\log u\le u/2$ for $u>0$, as the minimum of $u/2-\log u$ occurs at $u=2$ and is positive. Thus
\[
 (2\log2-1/2)u<2L+1+2\log2,
 \qquad u<3(1+L).
\]
The second inequality follows because $3(2\log2-1/2)>1+2\log2$ and $3(2\log2-1/2)>2$. Since $m\le d$, this proves \eqref{eq:width-bound}.

\paragraph{Slope components already in the discovered span.}
The same proof applies when the discovery vectors are
$g_j=a_j+\xi_j$, where $\xi_j\in S_{j-1}$ is a vector in the previously
discovered span, $\norm{a_j}\le G_{\mathrm{new}}$, and
$\dist(a_j,U)\le\eta$. Projecting onto $S_{j-1}^\perp$ removes
$\xi_j$, and adjoining $g_j$ or $a_j$ gives the same span. Thus use
$G_{\mathrm{new}}$ for the discovery bound, while the full-gradient bound
$G$ still governs phase volume. In particular, the gradient $\lambda x_j$
of an isotropic regularizer lies in the query span and never triggers an
additional direction.

\paragraph{Why equal error and discovery scales are insufficient.}
The separation $\eta\le\tau/2$ in the corollary cannot be replaced by
an assertion of small width at the threshold itself. To see this, let
$e_0,e_1,\ldots,e_n$ be orthonormal and $a_j=\eta(e_0+e_j)$. Every
$a_j$ is within $\eta$ of the line spanned by $e_0$ and has norm
$\sqrt2\eta$, but a direct least-squares calculation gives
\[
 \dist(a_j,\spanop(a_1,\ldots,a_{j-1}))^2=\eta^2(1+1/j)>\eta^2.
\]
A threshold-$\eta$ append rule therefore discovers every vector, although
all vectors are close to the same line. This refutes the inference that
width at the discovery threshold alone bounds generic irreversible span
discovery. It is not a lower bound for every convex optimizer.

\subsection{Optional dimension-free fallback}
A second query stream can provide a bound that needs no low-dimensional
structure. Interleave the wrapper with projected subgradient descent on
the original ball. For a horizon of $T$ fallback calls, start at $x_1=0$
and use the constant step size $\gamma=R/(G\sqrt T)$. The projection
inequality gives, for every $z\in X$,
\[
 \sum_{t=1}^T\ip{g_t}{x_t-z}
 \le\frac{\norm{x_1-z}^2}{2\gamma}
       +\frac\gamma2\sum_{t=1}^T\norm{g_t}^2
 \le RG\sqrt T.
\]
The observed protocol residual
$\max_{z\in X}T^{-1}\sum_t\ip{g_t}{x_t-z}$ is therefore at most
$RG/\sqrt T$. Jensen's inequality and the inexact oracle turn this into
a value-free gap certificate for the uniform average of at most
$\delta+RG/\sqrt T$. An exact streaming aggregate of the fallback slopes,
scalar products, and query points takes $O(d)$ memory; the full fallback
archive is unnecessary.

Interleaving the two independent streams changes their oracle counts by at
most a factor two. With the wrapper's tolerances chosen for the same target
$\delta+\eps$, return whichever stream first supplies its certificate.
The resulting call bound is the smaller of the intrinsic-dimension bound
and $O((RG/\eps)^2)$, without a compressibility assumption.

\section{Experimental details and additional results}
\label{app:channel-experiments}

\subsection{Data and objectives}
We use the UCI Abalone, Airfoil Self-Noise, Adult, and Breast Cancer
Wisconsin (Diagnostic) datasets
\citep{abalone_data,airfoil_data,adult_data,breast_cancer_data}.
The regression responses are ring count and sound-pressure level;
the classification labels are income category and diagnosis.
We randomly split each dataset into 70\% training, 15\% validation,
and 15\% test data, stratifying classification splits, with seed 20261007.
The training set is then capped at 2048 observations.
Adult uses the original training portion after removing observations
with missing values, with a new validation/test split from that portion.
Numeric predictors are standardized using training statistics, and
categorical predictors are one-hot encoded using training categories.
Regression responses are standardized using training statistics.

For preprocessed inputs $z_i$, define
\[
 K_{ij}=\exp\!\left(-\frac{\norm{z_i-z_j}^2}{2\sigma^2}\right),
 \qquad \sigma=\operatorname{median}_{i<j}\norm{z_i-z_j}.
\]
We use a full spectral factor $A$ satisfying $AA^\top=K$ and predictions
$Ax+c\ind$. All positive eigenvalues are retained, so $d=m$ for
$m$ training observations. The objectives are
\begin{align}
 F_{\rm SVM}(x)
 &=\frac{\lambda}{2}\norm{x}^2+
   \min_{c\in\R}\frac1m\sum_{i=1}^m
   \max\{0,1-y_i((Ax)_i+c)\},
 \label{eq:channel-svm}\\
 F_{\nu\text{-}\rm SVR}(x)
 &=\frac{\lambda}{2}\norm{x}^2+
   \min_{\substack{c\in\R\\\delta\ge0}}
   \left[\nu\delta+\frac1m\sum_{i=1}^m
   \max\{0,|y_i-(Ax)_i-c|-\delta\}\right],
 \qquad \nu=0.1.
 \label{eq:channel-svr}
\end{align}
The intercept and, for $\nu$-SVR, the tube width are minimized exactly
using order statistics at each query. The resulting objectives are
$\lambda$-strongly convex in $x$. We select $\lambda$ from
$\{10^{-4},10^{-3},10^{-2}\}$ by validation accuracy or RMSE using
specialized SVM/SVR solvers. All four datasets select $10^{-4}$.
Kernel width is fixed throughout.
These solvers are used for model selection and reference values only.

\begin{table}[t]
\centering\small
\caption{Selected settings. Step scales and threshold multipliers are
listed in SG/Mom/Ada order. The threshold is
$\tau=r\sqrt{\lambda\eps}$ with $\eps=10^{-3}F(0)$.
All tasks use $\lambda=10^{-4}$.}
\label{tab:channel-settings}
\setlength{\tabcolsep}{7pt}
\begin{tabular}{lcrcc}
\toprule
Data & Train/validation/test & $\sigma$ & Step scales & $r$\\
\midrule
Abalone & 2048/627/627 & 2.867 & $300/1000/0.3$ & $0.3/0.3/0.1$\\
Airfoil & 1052/225/226 & 2.852 & $333.3/1000/1$ & $0.3/0.3/0.3$\\
Adult & 2048/4524/4525 & 4.037 & $1000/1000/1$ & $0.3/0.3/0.3$\\
Breast Cancer & 398/85/86 & 6.334 & $1000/1000/3$ & $0.3/0.1/0.03$\\
\bottomrule
\end{tabular}
\end{table}

\subsection{Query methods and parameter selection}
All methods start from $x_0=0$. For update index $t\ge1$ and step
scale $s$, SG uses $x_t=x_{t-1}-s g_{t-1}/\sqrt t$.
Momentum uses $m_t=0.9m_{t-1}+0.1g_{t-1}$ and
$x_t=x_{t-1}-s m_t/\sqrt t$, with $m_0=0$.
Diagonal AdaGrad uses
$D_t=D_{t-1}+g_{t-1}^{\odot2}$ and
$x_t=x_{t-1}-s g_{t-1}/(\sqrt{D_t}+10^{-8})$, with coordinatewise
operations and $D_0=0$. AdaGrad is applied in the fixed spectral
coordinates of $A$ and therefore depends on this choice of coordinates.
Every 16 updates we also query the mean of the preceding block of
iterates; its subgradient is recorded for certification but does not
enter the method's updates. These additional queries count toward $N$.

Step scales are selected by the smallest incumbent objective after
500 queries. The coarse grid is
$\{0.1,1,10,100,1000\}$ for SG and momentum, and
$\{0.01,0.03,0.1,0.3,1\}$ for AdaGrad. We additionally try one third
and three times the best coarse scale.
Certificate gaps, stopping times, and test performance are not used
for this selection.

The compression threshold is selected separately from
$\{0.03,0.1,0.3\}\sqrt{\lambda\eps}$ by the smallest preliminary
time projection at 1 MiB/s among candidates reaching the target.
Table~\ref{tab:channel-settings} lists the chosen multipliers.
The append-only basis has capacity $\min(256,d)$.
If a slope's projection residual exceeds $\tau$ and capacity remains,
we append a reorthogonalized direction. At capacity we retain the
actual residual norm without expanding the basis.
Earlier records keep their original coordinates, residual radii,
and birth dimensions. Thus reaching capacity can weaken the bound,
but does not change its interpretation.

\subsection{Certificate construction and stopping}
At each query we form the loss slope $a_i=g_i-\lambda x_i$ and its
intercept $b_i=h(x_i)-\ip{a_i}{x_i}$ before compression.
Full maximizes \eqref{eq:baseline-quadratic} over $\Delta_N$, using a
CPU archive and an incrementally formed Gram matrix.
Compressed keeps the basis on the GPU, transmits
$(v_i,\eta_i,m_i)$, and solves
\eqref{eq:baseline-weight-master} using these records.
Both weight-selection problems contain all $N$ cuts.
Single-cut uses
\[
 L_{\rm sc}=\max\left\{0,
   \max_{i\le N}\left[F(x_i)-\frac{\norm{g_i}^2}{2\lambda}\right]
   \right\}.
\]
Each variant retains the known lower bound zero, the best single-cut
bound, and previous archive bounds. Masters are solved at the first
query and every 32 queries thereafter; the incumbent and single-cut bound
are updated at every query. Stopping can therefore occur between
master solves. We use Clarabel with feasibility and gap tolerances
$10^{-9}$, at most 300 iterations and 60 seconds per master.
Recovered weights are made simplex-feasible and evaluated in the
corresponding certificate formula.
All main-run masters report successful solution.
The common limits are 2000 queries and 600 seconds of computation,
excluding the emulated channel wait.

\subsection{Communication and timing}
We model a synchronous channel between a GPU worker and the
certification service. A common message contains three double-precision
scalars: $F(x_i)$, $b_i$, and $\norm{g_i}^2$.
Full additionally sends $d$ doubles, while compressed sends $m_i+2$
doubles for the coefficients, residual radius, and birth dimension.
The basis remains on the GPU; single-cut sends only the common message.
For a message of $S$ bytes, we insert a wall-clock wait
$S/B_{\rm link}$, with no fixed latency and no overlap with computation.
The primary bandwidth is $B_{\rm link}=1$ MiB/s.
This represents the bandwidth available to the certificate stream,
for example under a shared-link quota.
All computations actually run on one machine; the added wait emulates
the channel rather than implementing a distributed system.

We use an Intel Core i7-8550U and an NVIDIA GTX 1050 Max-Q, FP64
arithmetic, and one CPU thread. Timings include oracle evaluations,
updates, averaging, local GPU-to-host transfers, archive construction,
weight selection, monitoring, and the imposed channel delay.
Data preparation, kernel factorization, initialization, and test scoring
are excluded. The three variants run serially in a warmed process,
with cyclic ordering across three timing repetitions.
Their common query prefixes agree exactly in the recorded values.
These repetitions measure timing variability on fixed objectives.

\begin{table}[t]
\centering\small
\caption{Timing ranges across three main repetitions, final single-cut gaps
after 2000 queries, and test performance of the full-arm incumbent.
Times in Table~\ref{tab:channel-main} are medians of these repetitions.
Test performance is RMSE in standardized response units for regression
and accuracy for classification. Test data are not used for selection.}
\label{tab:channel-ranges}
\setlength{\tabcolsep}{6pt}
\begin{tabular}{llcccc}
\toprule
Data & Method & Full range (s) & Comp. range (s) & Single-cut gap & Test\\
\midrule
Abalone & SG & $[9.99,10.26]$ & $[3.63,5.07]$ & $6.78\!\times\!10^{-4}$ & 0.735\\
 & Mom & $[2.25,2.29]$ & $[0.77,1.04]$ & $4.12\!\times\!10^{-4}$ & 0.735\\
 & Ada & $[4.59,4.85]$ & $[1.73,1.79]$ & $8.18\!\times\!10^{-4}$ & 0.739\\
\midrule
Airfoil & SG & $[10.47,12.24]$ & $[4.37,5.57]$ & $1.66\!\times\!10^{-3}$ & 0.574\\
 & Mom & $[1.42,1.47]$ & $[0.65,0.89]$ & $1.27\!\times\!10^{-3}$ & 0.573\\
 & Ada & $[2.47,2.48]$ & $[1.19,1.34]$ & $2.21\!\times\!10^{-3}$ & 0.575\\
\midrule
Adult & SG & $[16.29,18.29]$ & $[13.26,15.77]$ & $7.77\!\times\!10^{-3}$ & 0.852\\
 & Mom & $[3.76,3.92]$ & $[1.46,1.74]$ & $6.22\!\times\!10^{-3}$ & 0.852\\
 & Ada & $[7.33,7.68]$ & $[3.53,4.32]$ & $6.51\!\times\!10^{-3}$ & 0.851\\
\midrule
Breast Cancer & SG & $[3.34,3.93]$ & $[2.15,2.70]$ & $1.08\!\times\!10^{-2}$ & 0.988\\
 & Mom & $[2.11,2.35]$ & $[1.58,1.75]$ & $9.38\!\times\!10^{-3}$ & 1.000\\
 & Ada & $[12.46,13.51]$ & $[8.65,9.67]$ & $1.57\!\times\!10^{-2}$ & 0.988\\
\bottomrule
\end{tabular}
\end{table}

\subsection{Bandwidth and accuracy sensitivity}
Figure~\ref{fig:channel-bandwidth} projects stopping time as
\[
 T(B_{\rm link})=T_{\rm native}+S/B_{\rm link},
\]
using separate preliminary runs without the imposed wait and their
total communicated bytes. The points at 0.1 and 10 MiB/s are
projections; the main 1 MiB/s results are measured with actual waiting.
Lower bandwidth increases the benefit of the smaller messages.
Without the imposed delay, compressed is faster in two of the twelve
preliminary comparisons; for the other cases, communication savings
offset its additional computation as bandwidth decreases.
On Abalone/SG the main measurements save 5.79 seconds
of channel wait, with other costs decreasing by 0.36 seconds;
on Adult/SG the 8.40 seconds saved are largely consumed by 7.05 seconds
of additional work. These are differences of component medians.

\begin{figure}[t]
\centering
\includegraphics[width=0.92\textwidth]{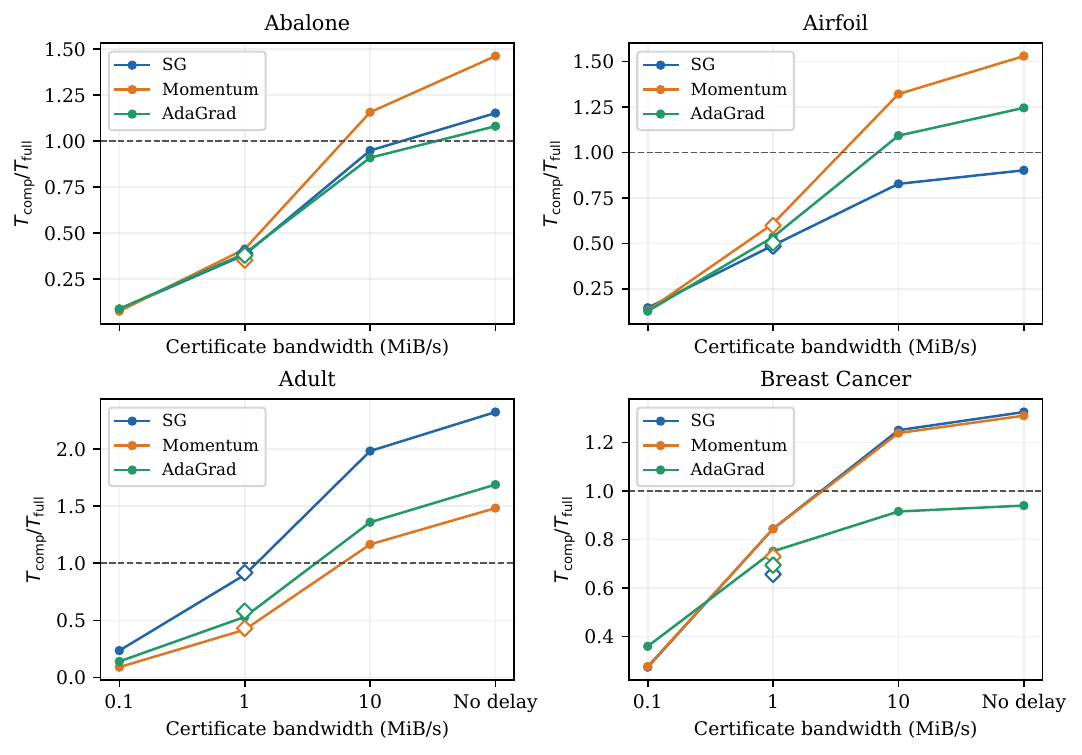}
\caption{Dependence on the available certificate bandwidth.
Lines use the preliminary timing projection; open diamonds show
ratios of median times from the three main measurements at 1 MiB/s.
The horizontal line denotes equal stopping times.
The last category is the preliminary run without an added channel delay.}
\label{fig:channel-bandwidth}
\end{figure}

Table~\ref{tab:channel-sensitivity} gives small sensitivity studies on
Abalone and Adult with SG, chosen before inspecting timing gains.
Thresholds are varied at the main accuracy using the preliminary runs;
accuracy is varied along separate continued trajectories with the selected
threshold fixed. Each of these comparisons uses one timing run.
At the main target all three tested thresholds retain a projected
advantage in the preliminary runs. The additional continued run on
Adult/SG projects a ratio of 1.007 at 1 MiB/s, consistent with the
small margin in this case.
At the coarse target $10^{-2}F(0)$ on Abalone, single-cut reaches the target
in 157 queries and projects to 1.00 seconds, compared with 1.12 seconds
for compressed and 2.18 seconds for full.

\begin{table}[t]
\centering\small
\caption{SG sensitivity. Threshold multiplier $r$ is varied at
$\eps/F(0)=10^{-3}$; accuracy is varied with the selected threshold
fixed. A dash means the target was not reached within 2000 queries.
Time ratios use the single-run projection at 1 MiB/s.}
\label{tab:channel-sensitivity}
\setlength{\tabcolsep}{8pt}
\begin{tabular}{llccrr}
\toprule
Data & Parameter & Value & Queries full/comp./single-cut & $k$ & $T_{\rm C}/T_{\rm F}$\\
\midrule
Abalone & $r$ & 0.03 & 384/384/--- & 133 & 0.437\\
 & $r$ & 0.1 & 384/384/--- & 116 & 0.414\\
 & $r$ & 0.3 & 384/385/--- & 83 & 0.382\\
 & $\eps/F(0)$ & $10^{-2}$ & 96/96/157 & 71 & 0.514\\
 & $\eps/F(0)$ & $10^{-3}$ & 384/385/--- & 83 & 0.695\\
 & $\eps/F(0)$ & $10^{-4}$ & 1652/1728/--- & 85 & 0.442\\
\midrule
Adult & $r$ & 0.03 & 612/731/--- & 256 & 0.937\\
 & $r$ & 0.1 & 612/731/--- & 256 & 0.940\\
 & $r$ & 0.3 & 612/714/--- & 256 & 0.899\\
 & $\eps/F(0)$ & $10^{-2}$ & 102/102/--- & 102 & 0.440\\
 & $\eps/F(0)$ & $10^{-3}$ & 612/714/--- & 256 & 1.007\\
 & $\eps/F(0)$ & $10^{-4}$ & ---/---/--- & --- & ---\\
\bottomrule
\end{tabular}
\end{table}

At $10^{-4}F(0)$ on Adult neither archive reaches the target.
The final full/compressed gaps are $1.99\times10^{-4}$ and
$2.63\times10^{-4}$, versus a target of $4.98\times10^{-5}$.
The common incumbent exceeds an independently computed feasible
reference value by $1.91\times10^{-4}$, already larger than the target;
the primal trajectory itself is therefore insufficiently accurate.
Compression also weakens the bound after the basis reaches capacity.
On Abalone both archives reach the stricter target, with the compressed
basis growing to only 85 directions. Here compression also reduces
computation without the imposed channel delay: the continued full and
compressed runs take 80.0 and 45.6 seconds, respectively.

Numerical checks of the profiled oracles, supporting inequalities,
simplex weights, and basis orthogonality show no inconsistency.
All reported gaps are evaluated in FP64.

\section{Details of the dimension-adaptation experiment}
\label{app:cp-experiment}

\subsection{Construction and oracle}
For each ambient dimension $d$, we orthonormalize a Gaussian matrix with
16 columns, using random seed 0. The first $k$ columns define $U$, so
different $k$ share a nested embedding at each $d$.
The objective \eqref{eq:cp-experiment-objective} is inspired by the
max-of-coordinates example in \citet[Section~3.2.1]{nesterov2018}.
We use ball radius $R=2$
and quadratic coefficient one. Its exact solution follows from
\[
 \sqrt{k}\max_j\ip{u_j}{x}
 \ge\ip{U\ind/\sqrt{k}}{x},\qquad
 \norm{U\ind/\sqrt{k}}=1.
\]
Completing the square gives $F(x)\ge-1/2$, with equality at
$x_*=-U\ind/\sqrt{k}$, which lies strictly inside the ball.
Thus both the initial gap and the solution norm are independent of $d,k$.

Each objective call returns only
\[
 g(x)=\sqrt{k}u_{j_*}+x,\qquad
 j_*:=\min\argmax_{1\le j\le k}\ip{u_j}{x}.
\]
This is an exact subgradient, with $\delta_i=0$ and
$\norm{g(x)}\le G:=\sqrt{k}+2$ on the ball.
The adaptive and ambient variants are not given $U$.
The known-subspace variant receives $U$ and maps its $k$ working
coordinates to the ambient oracle.
Function values and the explicit optimum are used only to check the
reported gaps; they do not affect queries or stopping.

\subsection{Common cutting-plane procedure and certificate}
We use the central-cut version of the volumetric method analyzed by
\citet{anstreicher1998}, with the explicit update rule in Algorithm~1 of
\citet{blanchard2023}.
Starting from $[-R,R]^q$, the procedure approximately minimizes
\[
 V(y)=\frac12\log\det\!\left(A^\top S(y)^{-2}A\right),\qquad
 S(y)=\operatorname{diag}(b-Ay),
\]
for the localizer $Ay<b$.
It inserts a central halfspace at the current center or deletes the
constraint with smallest leverage score if that score is below $0.04$.
Deletion is followed by recentering before another oracle call.
Centers are computed by damped Newton steps with decrement at most
$10^{-5}$; a QR factorization of $S(y)^{-1}A$ avoids forming ill-conditioned
normal equations. When barrier values are indistinguishable numerically,
the line search also checks a decrease in the Newton decrement.
This is a volumetric center, rather than the analytic center used in
other practical cutting-plane methods.

The adaptive variant starts with an empty basis and capacity $q=1$.
It appends a reorthogonalized gradient residual when its norm exceeds
$\tau=10^{-10}$. On overflow it doubles $q$, retains the discovered basis,
and discards the preceding phase's records, as in
Algorithm~\ref{alg:adaptive-cp}. The other variants use fixed dimensions
$q=k$ and $q=d$, respectively.
Records are retained only while their associated objective cuts remain
in the localizer. Ball-separation cuts are handled by the same procedure.

At each center, after synchronizing retained records, we minimize
$C(\alpha)$ from \eqref{eq:ball-certificate} over simplex weights using
its second-order-cone formulation and the historical norm recursion.
The conic solver uses feasibility and gap tolerances of $10^{-10}$.
The recovered weights are made feasible by clipping negative
entries and normalizing, and the certificate is evaluated directly from
these feasible weights. A numerical allowance of $10^{-11}$ is added.
We stop at the first check with $C\le\eps$, returning the corresponding
weighted average. With $\zeta=\eps-R\tau$, this is the stopping threshold
$\delta+R\tau+\zeta$ in Theorem~\ref{thm:cp}.

We count every objective-oracle call, including overflow queries and
queries repeated after a restart. Centering, constraint deletion, weight
selection, and ball separation require no objective-oracle call.
Several accuracy targets are read from the first certificate crossings
of one trajectory, since the target affects only stopping.
The common limits of 12,000 objective calls and 180 seconds per run are
never reached. The 31 runs give 43 successful target crossings; the
additional four targets for the ambient variant at $d=64,k=4$ also
include $\eps=10^{-6}$, reached in 694 calls.

\subsection{Complete dimension and accuracy counts}
Tables~\ref{tab:cp-dimension-details} and
\ref{tab:cp-accuracy-details} give the counts underlying
Figure~\ref{fig:adaptive-cp-experiment}.
The known-subspace counts in the dimension sweep are identical across
the three ambient dimensions. Every adaptive run discovers exactly $k$
directions; its final capacity is $q=k$ for the tested powers of two.
Small differences between embeddings occur in the floating-point
selection of active indices and centering on this symmetric objective.

\begin{table}[htbp]
 \centering
 \caption{Objective-oracle calls at $\eps=10^{-4}$.
 The first three rows use the adaptive method; the last row gives the
 common known-subspace counts.}
 \label{tab:cp-dimension-details}
 \begin{tabular}{lrrrr}
  \toprule
  & $k=2$ & $k=4$ & $k=8$ & $k=16$\\
  \midrule
  Adaptive, $d=64$   & 23 & 52 & 104 & 207\\
  Adaptive, $d=256$  & 23 & 52 & 105 & 208\\
  Adaptive, $d=1024$ & 23 & 52 & 105 & 207\\
  Known subspace    & 21 & 47 & 95  & 189\\
  \bottomrule
 \end{tabular}
\end{table}

\begin{table}[htbp]
 \centering
 \caption{Objective-oracle calls at $d=256,k=8$ for each certified
 accuracy.}
 \label{tab:cp-accuracy-details}
 \begin{tabular}{rrr}
  \toprule
  $\eps$ & Known subspace & Adaptive\\
  \midrule
  $10^{-2}$ & 54  & 64\\
  $10^{-3}$ & 75  & 85\\
  $10^{-4}$ & 95  & 105\\
  $10^{-5}$ & 116 & 126\\
  $10^{-6}$ & 137 & 147\\
  \bottomrule
 \end{tabular}
\end{table}

\subsection{Numerical checks and interpretation}
We check subgradient support inequalities, barrier derivatives against
finite differences, and the volumetric center against an independent
optimization solve. An independent weight optimization checks the
historical certificate on small examples with nonzero residual profiles.
For all 31 optimization runs, we reconstruct the final output from the
retained points and weights and independently reevaluate its certificate.
All recorded checks satisfy $F(\bar x)-F_*\le C$, and all stopping checks
satisfy $C\le\eps$. The numerical dual gap at each reported endpoint is
below $\eps/4$.
The largest basis orthogonality error is $1.2\times10^{-15}$, and the
largest omitted gradient norm is $6.4\times10^{-15}$.
At most $4q$ constraints are retained during these runs, below the
$25q+1$ bound for the central-cut primitive.

The controlled construction isolates the discovered-dimension dependence
and permits direct checks of the value-free stopping certificate.
These are floating-point illustrations of the theoretical contribution,
not interval-certified executions of the exact-arithmetic theorem.
The experiment uses one embedding per ambient dimension and effectively
exact-rank gradients; it does not test approximate-rank trajectories or
the benefit of historical residual information at a nonnegligible
compression scale.

\end{document}